\newtheorem{Thm}{Theorem}[section]
\newtheorem{Prop}[Thm]{Proposition}
\newtheorem{Lem}[Thm]{Lemma}
\def\bB {\mathbf{B}}
\def\bC {\mathbf{C}}
\def\bL {\mathbf{L}}
\def\bR {\mathbf{R}}
\def\bS {\mathbf{S}}
\def\bT {\mathbf{T}}
\def\cH {\mathcal{H}}
\def\cK {\mathcal{K}}
\def\cL {\mathcal{L}}
\def\cM {\mathcal{M}}
\def\a {{\alpha}}
\def\g {{\gamma}}
\def\Ga {{\Gamma}}
\def\de {{\delta}}
\def\eps {{\varepsilon}}
\def\th {{\theta}}
\def\si {{\sigma}}
\def\om {{\omega}}
\def\Om {{\Omega}}
\def\rstr {{\big |}}
\def\indc {{\bf 1}}
\def\la {\langle}
\def\ra {\rangle}
\def\d {{\partial}}
\def\grad {{\nabla}}
\newcommand{\Supp}{\operatorname{supp}}
\newcommand{\Supess}{\operatorname{supess}}
\newcommand{\Dist}{\operatorname{dist}}
\newcommand{\Ker}{\operatorname{ker}}
\newcommand{\be}{\begin{equation}}
\newcommand{\ee}{\end{equation}}
\newcommand{\bmat}{\begin{matrix}}
\newcommand{\emat}{\end{matrix}}
\newcommand{\ba}{\begin{aligned}}
\newcommand{\ea}{\end{aligned}}
\newcommand{\lb}{\label}
\begin{document}

\title[Return to equilibrium for collisionless gas]{On the speed of approach to equilibrium\\ for a collisionless gas}

\author[K. Aoki]{Kazuo Aoki}
\address[K.A.]{Kyoto University, Department of Mechanical Engineering and Science, 
Graduate School of Engineering, Kyoto 606-8501, Japan}
\email{aoki@aero.mbox.media.kyoto-u.ac.jp}

\author[F. Golse]{Fran\c cois Golse}
\address[F. G.]{Ecole polytechnique, Centre de Math\'ematiques Laurent Schwartz, 
91128 Palaiseau Cedex, France}
\email{golse@math.polytechnique.fr}

\begin{abstract}
We investigate the speed of approach to Maxwellian equilibrium for a collisionless gas enclosed in a vessel whose wall are kept at a uniform, 
constant temperature, assuming diffuse reflection of gas molecules on the vessel wall. We establish lower bounds for potential decay rates
assuming uniform $L^p$ bounds on the initial distribution function. We also obtain a decay estimate in the spherically symmetric case. We 
discuss with particular care the influence of low-speed particles on thermalization by the wall.
\end{abstract}


\subjclass{82C40 (35F10, 35B40)}

\keywords{Free-molecular gas; Approach to equilibrium; Diffuse reflection; Kinetic theory of gases; Renewal equation}

\maketitle

\rightline{\textit{In memory of Carlo Cercignani (1939-2010)}}

\section{Introduction}

Relaxation to equilibrium is a fundamental issue in statistical mechanics. In the kinetic theory of gases, convergence to equilibrium is formulated 
in terms of the asymptotic behavior of solutions of the Boltzmann equation in the long time limit. In \cite{Cerci82}, Cercignani formulated a quite
remarkable conjecture, asserting the domination of Boltzmann's relative H functional by a constant multiple of Boltzmann's entropy production
functional. If true, Cercignani's conjecture would imply exponential relaxation to equilibrium for space homogeneous solutions of the Boltzmann 
equation, and would have important consequences even for space inhomogeneous solutions. This conjecture was subsequently disproved in
a series of papers by Bobylev, Cercignani and Wennberg \cite{Boby88, BobyCerci99, Wennb97}. And yet, in the words of Villani, ``Cercignani's
conjecture is sometimes true, and almost always true'' \cite{Villani03}. Slightly later, Desvillettes and Villani \cite{DesVil05} studied the speed of
convergence to equilibrium for space inhomogeneous solutions of the Boltzmann equation, under rather stringent conditions of regularity, decay
at large velocities and positivity, but without assuming that the initial data is close to equilbrium. They found that the distance between the distribution 
function at time $t$ and the final equilibrium state decays like $O(t^{-m})$ for each $m>0$ as $t\to+\infty$. Their result makes critical use of the 
Boltzmann collision integral and entropy production, and of their combination with the free transport operator (the left-hand side of the Boltzmann 
equation.) In an earlier paper \cite{Desvil89}, Desvillettes had proved that Boltzmann's entropy production can be regarded as a distance between 
the distribution function and the set of Maxwellian states. On the other hand, Guo proved in \cite{Guo09} that, for initial data close enough to a 
uniform Maxwellian, solutions of the Boltzmann equation converge to equilibrium at exponential speed.

In \cite{DesVil05}, Desvillettes and Villani treated the cases where the spatial domain is the flat torus (equivalently, the Euclidian space with 
periodic initial data), or a smooth bounded domain with either the specular or bounce-back reflection conditions at the boundary. Therefore,
convergence to equilibrium is driven by the collision process alone, and the role of the boundary condition is rather indirect. In the long time
limit, the solution of the Boltzmann equation converges to a uniform Maxwellian state whose density and temperature are given in terms of 
the initial data by the conservation of total mass and total energy. The boundary condition is involved only through its compatibility with the
conservation laws of total mass and energy. In a more recent work \cite{Villani09}, Villani extended their result to the case where the gas is 
enclosed in a vessel whose walls are maintained at a constant temperature, modeled with an accommodation or diffuse reflection condition.
In that case, mass is the only quantity that is conserved under the dynamics of the Boltzmann equation, so that the final equilibrium state is
the centered, uniform Maxwellian with the wall temperature and same total mass as the initial data. In that case, the boundary condition has
a direct influence on the relaxation to equilibrium. Nevertheless, the combined effect of entropy production by the collision process and 
of free streaming, an example of what is termed ``hypocoercivity'', is the driving mechanism used by Villani in his proof of relaxation to 
equilibrium \cite{Villani09}. 

However, it seems obvious from physical consideration that the accommodation or diffuse reflection condition on a wall maintained at some
constant temperature is enough to drive the gas to an equilibrium state, and that the role of the collision process is not as important as in the
periodic, or specular or bounce-back reflection cases --- which are in any case less realistic. 

In order to establish the role of Boltzmann's collision integral in the rate of relaxation to equilibrium, we consider the case of a collisionless gas 
enclosed in a vessel whose surface is maintained at a constant temperature, and investigate the speed of approach to equilibrium for such 
a system. The present paper provides a theoretical confirmation of the numerical results obtained by the authors in \cite{TsujiAokiFG2010}
in collaboration with Tsuji. Basically,  while the collision process is not necessary in the convergence to equilibrium, it greatly influences the 
speed of relaxation to equilibrium. In the same spirit, Desvillettes and Salvarani \cite{DesvilSalvarani} have investigated the speed of relaxation 
to equilibrium in the case of linear collisional models where the collision frequency is not uniformly bounded away from $0$, by adapting 
the hypocoercivity method.

We dedicate this piece of work to the memory of our teacher, friend and colleague Carlo Cercignani, in recognition of his leading role in the 
mathematical theory of the Boltzmann equation.

\section{Statement of the problem}

Consider a collisionless gas enclosed in a container materialized by a bounded domain $\Om$ in $\bR^N$ whose boundary $\d\Om$ is maintained 
at a constant temperature $\th_w>0$. Its distribution function $F\equiv F(t,x,v)$ satisfies
\be\lb{BVPIdealGas}
\left\{
\ba
{}&\d_tF+v\cdot\grad_xF=0\,,\quad\quad\quad\quad\quad\quad\quad\quad\quad\quad\quad\quad\quad (x,v)\in\Om\times\bR^N,
\\
&F(t,x,v)|v\!\cdot\! n_x|=\int_{v'\!\cdot n_x>0}\!\!F(t,x,v')v'\!\cdot\! n_xK(x,v,dv')\,, \quad(x,v)\in\Ga_-\,,
\\
&F\rstr_{t=0}=F^{in}\,.
\ea
\right.
\ee

The notation $n_x$ designates the unit outward normal at the point $x\in\d\Om$, and we have used the notation
$$
\ba
\Ga_+:=\{(x,v)\in\d\Om\times\bR^N\,|\,v\cdot n_x>0\}\,,
\\
\Ga_-:=\{(x,v)\in\d\Om\times\bR^N\,|\,v\cdot n_x<0\}\,.
\ea
$$
The boundary condition is given by the measure-valued kernel $K(x,v,dv')\ge 0$ satisfying the assumptions

\smallskip
\noindent
(i) for each $x\in\d\Om$, one has
$$
\int_{v\cdot n_x<0}K(x,v,dv')dv=dv'
$$
(ii) for each $x\in\d\Om$, one has
$$
\cM_{(1,0,\th)}(v)|v\!\cdot\! n_x|=\int_{v'\!\cdot n_x>0}\cM_{(1,0,\th)}(v')v'\!\cdot\! n_xK(x,v,dv')\,,\quad v\!\cdot\! n_x<0\,,
$$
iff $\th=\th_w$. Throughout this paper, we use the following notation for the Maxwellian distribution:
$$
\cM_{(\rho,u,\th)}(v)=\frac{\rho}{(2\pi\th)^{N/2}}e^{-\frac{|v-u|^2}{2\th}}
$$
(Henceforth, we denote for simplicity $M_w=\cM_{(1,0,\th_w)}$.) With these notations, the case of diffuse reflection corresponds with
$$
K(x,v,dv'):=\frac{\cM_{(1,0,\th_w)}(v)|v\cdot n_x|dv'}{\displaystyle\int_{u\cdot n_x<0}\cM_{(1,0,\th_w)}(u)|u\cdot n_x|du}\,.
$$
The case of an accommodation boundary condition (i.e. the so-called Maxwell-type condition) corresponds with
$$
\ba
K(x,v,dv')&:=(1-\a(x))\de(v'-v+2(v\cdot n_x)n_x)
\\
&+\a(x)\frac{\cM_{(1,0,\th_w)}(v)|v\cdot n_x|dv'}{\displaystyle\int_{u\cdot n_x<0}\cM_{(1,0,\th_w)}(u)|u\cdot n_x|du}\,.
\ea
$$

In the case of diffuse reflection, Arkeryd and Nouri \cite{ArkeNouri97} have proved that the solution of (\ref{BVPIdealGas}) satisfies 
$$
F(t+\cdot,\cdot,\cdot)\to\frac{\cM_{(1,0,\th_w)}}{|\Om|}\iint_{\Om\times\bR^N} F^{in}dxdv\hbox{ in }L^1([0,T]\times\Om\times\bR^N)
$$
as $t\to+\infty$.

In the present paper, we study in detail the following

\smallskip
\noindent
\textbf{Problem:}  Is there a decay rate $E(t)\to 0$ as $t\to +\infty$, for which an asymptotic estimate of the form
\be\lb{DecayUnifL1}
\ba
\left\|F(t+\cdot,\cdot,\cdot)-\frac{\cM_{(1,0,\th_w)}}{|\Om|}\iint_{\Om\times\bR^N} F^{in}dxdv\right\|_{L^1([0,T]\times\Om\times\bR^N)}=O(E(t))
\ea
\ee
holds for each solution $F$ of (\ref{BVPIdealGas}) uniformly as $F^{in}$ runs through certain classes of bounded subsets of $L^1(\bR^N\times\bR^N)$?

\section{Negative results and lower bounds on $E$}\lb{S-NegResult}

We first consider the problem of finding a decay rate that is uniform as $F^{in}$ runs through all bounded subsets of  $L^1(\bR^N\times\bR^N)$.

\begin{Prop}\lb{P-NonDecayL1}
There does not exist any decay rate function $E:\,\bR_+\to\bR_+$ such that $E(t)\to 0$ as $t\to+\infty$ and
\be\lb{DecayL1}
\ba
\left\|F(t+\cdot,\cdot,\cdot)-\frac{\cM_{(1,0,\th_w)}}{|\Om|}\iint_{\Om\times\bR^N} F^{in}dxdv\right\|_{L^1([0,T]\times\Om\times\bR^N)}
\\
\le
E(t)\|F^{in}\|_{L^1(\bR^N\times\bR^N)}
\ea
\ee
for each $t\ge 0$.
\end{Prop}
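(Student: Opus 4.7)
My approach is to exhibit, for each $t \geq 0$, an initial datum of unit $L^1$ norm that is so concentrated at low velocities and so deep inside $\Om$ that no particle can reach the wall before time $t+T$. On this time window the dynamics reduces to pure free transport, so $F(t+s,\cdot,\cdot)$ stays confined to arbitrarily small speeds and remains at near-maximal $L^1$-distance from the Maxwellian $\cM_{(1,0,\th_w)}/|\Om|$, whose velocity support is all of $\bR^N$.

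Concretely, I fix $x_0 \in \Om$ and $r, d > 0$ with $\Dist(B(x_0,r), \d\Om) \geq d$. For each small $\eps > 0$ I choose a nonnegative $F^{in}_\eps$ with $\|F^{in}_\eps\|_{L^1(\bR^N\times\bR^N)} = 1$ supported in $B(x_0,r) \times B(0,\eps)$, extended by $0$ outside. Since every free flight issued from $\Supp(F^{in}_\eps)$ has velocity modulus at most $\eps$ and must cover distance at least $d$ to reach $\d\Om$, the first boundary hit occurs at time $\geq d/\eps$. Choosing $\eps < d/(t+T)$, no characteristic from $\Supp(F^{in}_\eps)$ interacts with the wall before time $t+T$, and the unique mild solution of (\ref{BVPIdealGas}) is
\[
F(s,x,v) = F^{in}_\eps(x - sv, v), \qquad s \in [0, t+T], \ (x,v) \in \Om \times \bR^N,
\]
regardless of which kernel $K$ satisfying (i)-(ii) is imposed at the boundary. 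In particular $F(t+s,\cdot,\cdot)$ has unit $L^1$-mass supported in $\{|v| \leq \eps\}$ for every $s \in [0,T]$.

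The desired lower bound then comes from splitting the integration along $|v|=\eps$. On $\{|v|>\eps\}$, $F(t+s,\cdot,\cdot) \equiv 0$, so the integrand equals $\cM_{(1,0,\th_w)}(v)/|\Om|$ and contributes $\int_{|v|>\eps}\cM_{(1,0,\th_w)}(v)\,dv = 1 - O(\eps^N)$. On $\{|v| \leq \eps\}$, the triangle inequality together with $\int_{|v|\leq\eps}\cM_{(1,0,\th_w)}(v)\,dv = O(\eps^N)$ yields a contribution of at least $1 - O(\eps^N)$. Summing and integrating over $s \in [0,T]$,
\[
\left\| F(t+\cdot,\cdot,\cdot) - \frac{\cM_{(1,0,\th_w)}}{|\Om|} \iint F^{in}_\eps\,dx\,dv \right\|_{L^1([0,T]\times\Om\times\bR^N)} \geq (2 - O(\eps^N))\,T.
\]
Inserted into (\ref{DecayL1}) with $\|F^{in}_\eps\|_{L^1} = 1$, this forces $E(t) \geq (2 - O(\eps^N))T$ for every admissible $\eps$; letting $\eps \to 0$ yields $E(t) \geq 2T$ for every $t \geq 0$, contradicting $E(t) \to 0$.

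There is no real structural obstacle, since the construction is designed so that the boundary condition plays no role on the relevant time interval. The only point requiring care is that $\eps$ must be shrunk as $t$ grows (to keep $\eps(t+T) < d$), which is always possible because $d$ depends only on the geometry of $\Om$. This low-velocity concentration mechanism is precisely what the positive decay bounds in the rest of the paper will have to rule out via additional moment or $L^p$ assumptions on the initial datum.
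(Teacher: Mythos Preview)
Your proof is correct and rests on the same mechanism as the paper's: choose initial data concentrated near $v=0$ and deep inside $\Om$, so that particles cannot reach $\d\Om$ before time $t+T$. The technical route differs slightly. The paper does not identify $F$ exactly on $[0,t+T]$; instead it uses the comparison $F\ge\Phi$ with the solution $\Phi$ of the absorbing problem (\ref{AbsorbIdealGas}) and bounds only the positive part $(F-\tfrac{|\bB|^2}{|\Om|}\cM_{(1,0,\th_w)})^+$ from below. This yields the cruder constant $E(t)\ge T/|\bB|$ rather than your $E(t)\ge 2T$, but the comparison with $\Phi$ is exactly the device reused verbatim in Propositions~\ref{P-DecayLlnL} and~\ref{P-DecayLp}, where one must track the $\eps$-dependence of both sides rather than merely send $\eps\to 0$. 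Your argument, by contrast, exploits uniqueness: you check that the free-transport extension $G(s,x,v)=F^{in}_\eps(x-sv,v)$ has identically zero trace on $\d\Om$ for $s\le t+T$, so that both sides of the boundary condition vanish and $G$ is the solution. This is slightly more direct and gives the sharper constant, but it does not immediately transfer to the quantitative lower bounds of the later propositions, where the paper's positive-part estimate with explicit $\eps$-dependence is what is actually optimized.
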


Since the initial boundary value problem (\ref{BVPIdealGas}) is linear, if there is a long time decay as in (\ref{DecayUnifL1}) uniformly as the initial 
data $F^{in}$ runs through bounded subsets of $L^1$, an inequality of the form (\ref{DecayL1}) must hold.

\begin{proof}
Assume that the origin $0\in\Om$ --- if not, one can apply a translation in the $x$-variable without changing the nature of the problem --- and set 
$R=\tfrac12\Dist(0,\d\Om)$. We henceforth denote by $\bB$ the closed unit  ball of $\bR^N$, and by $\indc_A$ the indicator function of the set $A$.

For $0<\eps\ll 1$, set 
$$
F^{in}(x,v)=\frac1{\eps^{2N}}\indc_{\eps\bB}(x)\indc_{\eps\bB}(v)
$$
(with $\eps\bB:=\{\eps z\,|\,z\in\bB\}$) so that
$$
\iint F^{in}(x,v)dxdv=|\bB|^2\,.
$$
Call $\tau(x,v):=\inf\{t\ge 0\,|\,x-tv\notin\Om\}$; obviously the solution $F$ of (\ref{BVPIdealGas}) satisfies $F\ge \Phi$, where $\Phi$ is the solution 
of the same free transport equation, with the same initial data, but with absorbing boundary condition:
\be\lb{AbsorbIdealGas}
\left\{
\ba
{}&\d_t\Phi+v\cdot\grad_x\Phi=0\,, \quad&&(x,v)\in\Om\times\bR^N,
\\
&\Phi(t,x,v)=0\,, &&(x,v)\in\Ga_-\,,
\\
&\Phi\rstr_{t=0}=F^{in}\,.&&{}
\ea
\right.
\ee
Solving for $\Phi$ along characteristics, one finds that
$$
\Phi(t,x,v)=F^{in}(x-tv,v)\indc_{0\le t\le\tau(x,v)}\,.
$$
Now
$$
\ba
\left\|F(t+\cdot,\cdot,\cdot)-\frac{\cM_{(1,0,\th_w)}}{|\Om|}\iint_{\Om\times\bR^N} F^{in}dxdv\right\|_{L^1([0,T]\times\Om\times\bR^N)}
\\
=
\left\|F(t+\cdot,\cdot,\cdot)-\frac{|\bB|^2}{|\Om|}\cM_{(1,0,\th_w)}\right\|_{L^1([0,T]\times\Om\times\bR^N)}
\\
\ge
\int_0^T\int_\Om\int_{\bR^N}\left(F(t+s,x,v)-\frac{|\bB|^2}{|\Om|}\cM_{(1,0,\th_w)}(v)\right)^+dvdxds\,.
\ea
$$
Henceforth, if $A\subset\bR^3$ and $u\in\bR^3$, we denote $A+u:=\{z+u\,|\,z\in A\}$. Since the function $z\mapsto z^+=\max(z,0)$ is 
nondecreasing, one has
$$
\ba
\left\|F(t+\cdot,\cdot,\cdot)-\frac{\cM_{(1,0,\th_w)}}{|\Om|}\iint_{\Om\times\bR^N} F^{in}dxdv\right\|_{L^1([0,T]\times\Om\times\bR^N)}
\\
\ge\int_0^T\int_\Om\int_{\bR^N}\left(\Phi(t+s,x,v)-\frac{|\bB|^2}{|\Om|}\cM_{(1,0,\th_w)}(v)\right)^+dvdxds
\\
=\int_0^T\int_\Om\int_{\bR^N}\left(\frac1{\eps^{2N}}\indc_{\eps\bB+(t+s)v}(x)\indc_{\eps\bB}(v)\indc_{0\le t+s\le\tau(x,v)}\right.\quad\quad
\\
\left.-\frac{|\bB|^2}{|\Om|}\cM_{(1,0,\th_w)}(v)\right)^+dvdxds
\\
\ge\int_0^T\int_\Om\int_{\bR^N}\left(\frac1{\eps^{2N}}\indc_{\eps\bB+(t+s)v}(x)\indc_{\eps\bB}(v)\indc_{0\le(t+s)|v|\le R-\eps}\right.\quad
\\
\left.-\frac{|\bB|^2}{|\Om|}\cM_{(1,0,\th_w)}(v)\right)^+dvdxds
\ea
$$
since $\tau(x,v)|v|\ge 2R-\eps$ for each $x\in\eps\bB$. Now
$$
\ba
\left(\frac1{\eps^{2N}}\indc_{\eps\bB+(t+s)v}(x)\indc_{\eps\bB}(v)\indc_{0\le(t+s)|v|\le R-\eps}
	-\frac{|\bB|^2}{|\Om|}\cM_{(1,0,\th_w)}(v)\right)^+
\\
=
\frac1{\eps^{2N}}\indc_{\eps\bB+(t+s)v}(x)\indc_{\eps\bB}(v)\indc_{0\le(t+s)|v|\le R-\eps}
	\left(1-\frac{\eps^{2N}|\bB|^2}{|\Om|}\cM_{(1,0,\th_w)}(v)\right)^+
\\
\ge
\frac1{\eps^{2N}}\indc_{\eps\bB+(t+s)v}(x)\indc_{\eps\bB}(v)\indc_{0\le(t+s)|v|\le R-\eps}
	\left(1-\frac{\eps^{2N}|\bB|^2}{|\Om|(2\pi\th_w)^{N/2}}\right)^+\,.
\ea
$$
Then
$$
\ba
\int_0^T\int_\Om\int_{\bR^N}\frac1{\eps^{2N}}\indc_{\eps\bB+(t+s)v}(x)\indc_{\eps\bB}(v)\indc_{0\le(t+s)|v|\le R-\eps}dvdxds
\\
=
|\bB|\int_0^T\int_{\bR^N}\frac1{\eps^N}\indc_{\eps\bB}(v)\indc_{0\le(t+s)|v|\le R-\eps}dvds
\\
=
|\bB|\int_0^T\int_{\bR^N}\frac1{\eps^N}\indc_{0\le|v|\le\min(\eps,(R-\eps)/t+s)}dvds
\\
=|\bB|\int_0^T\frac1{\eps^N}\min\left(\eps^N,\frac{(R-\eps)^N}{(t+s)^N}\right)ds
\\
\ge|\bB|\frac{T}{\eps^N}\min\left(\eps^N,\frac{(R-\eps)^N}{(t+T)^N}\right)\,.
\ea
$$

Therefore, if there is a uniform decay rate $E$ as in (\ref{DecayL1}), it must satisfy
$$
T|\bB|\left(1-\frac{\eps^{2N}|\bB|^2}{|\Om|(2\pi\th_w)^{N/2}}\right)^+\left(\min\left(1,\frac{(R-\eps)}{\eps(t+T)}\right)\right)^N\le |\bB|^2E(t)\,.
$$

Here $\eps$ only has to satisfy $0<\eps<R$.  If one lets $\eps\to 0^+$, this contradicts the existence of a profile $E$ in (\ref{DecayL1}) 
such that $E(t)\to 0$ as $t\to+\infty$. 
\end{proof}

\smallskip
The negative result in Proposition \ref{P-NonDecayL1} could have been easily anticipated: as $\eps\to 0$, our choice of $F^{in}$ converges 
to $|\bB|^2\de_{x=0}\de_{v=0}$ in the weak topology of measures. Then particles distributed under $\de_{x=0}\de_{v=0}$ do not move, hence
cannot reach the boundary and therefore cannot be thermalized. Thus, in some sense, the problem of finding a decay rate as in (\ref{DecayL1}) 
is trivial if stated in this way.

\smallskip
Before going further, let us comment on the kind of inequality considered in Proposition \ref{P-NonDecayL1}. In order to measure the speed of
approach to equilibrium for the problem (\ref{BVPIdealGas}), it might be more natural to consider a pointwise in time inequality of the type
\be\lb{PtwseDecay}
\left\|F(t,\cdot,\cdot)-\frac{\cM_{(1,0,\th_w)}}{|\Om|}\iint_{\Om\times\bR^N} F^{in}dxdv\right\|_{L^1(\Om\times\bR^N)}
\\
\le
E(t)\|F^{in}\|_{L^1(\bR^N\times\bR^N)}
\ee
instead of (\ref{DecayL1}). Observe that the l.h.s. of (\ref{DecayL1}) is the time-average of the l.h.s. of (\ref{PtwseDecay}), so that (\ref{PtwseDecay})
is a stronger statement than (\ref{DecayL1}) --- as a matter of fact, (\ref{PtwseDecay}) entails  (\ref{DecayL1}) with $E(t)$ changed into $TE(t)$.
Therefore, negative results obtained on time-averaged estimates like (\ref{DecayL1}) are stronger than their analogues for the corresponding
pointwise estimate similar to (\ref{PtwseDecay}). This is why we sought to confirm numerically pointwise in time bounds like (\ref{PtwseDecay}) in
our previous paper \cite{TsujiAokiFG2010}, and not time-averaged estimates as in the work of Arkeryd-Nouri \cite{ArkeNouri97}. Throughout the
present section, we discuss negative results on the speed of approach to equilibrium and therefore consider time-averaged decay estimates.

\smallskip
A slightly less trivial issue is to investigate the existence of such a decay rate under the additional assumption ruling out the possibility of a 
complete concentration as above. This could be done by controling not only the $L^1$ norm of the initial data, but (for instance) its entropy, 
or relative entropy with respect to the final state. For instance, one could look instead at a bound of the form
\be\lb{DecayH}
\ba
\left\|F(t+\cdot,\cdot,\cdot)-\frac{\cM_{(1,0,\th_w)}}{|\Om|}\iint_{\Om\times\bR^N} F^{in}dxdv\right\|_{L^1([0,T]\times\Om\times\bR^N)}
\\
\le E(t)\iint_{\Om\times\bR^N}F^{in}(x,v)|\ln F^{in}(x,v)|dxdv\,.
\ea
\ee

\begin{Prop}\lb{P-DecayLlnL}
If there is a time decay rate $E$ such that the solution of (\ref{BVPIdealGas}) satisfies (\ref{DecayH}), then it must satisfy
$$
E(t)\ge\frac{C}{\ln t}\quad\hbox{ as }t\to+\infty\,.
$$
\end{Prop}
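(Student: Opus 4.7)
The plan is to reuse the concentrating family of initial data from the proof of Proposition \ref{P-NonDecayL1}, but this time to track the entropy cost of concentration and to optimize the scale parameter $\varepsilon$ against $t$ rather than send $\varepsilon \to 0$.

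First, I would take the same test datum
$$
F^{in}(x,v)=\frac1{\varepsilon^{2N}}\indc_{\varepsilon\bB}(x)\indc_{\varepsilon\bB}(v)
$$
as before. For $\varepsilon$ small enough that $\varepsilon^{-2N}>e$, the integrand $F^{in}|\ln F^{in}|$ reduces to $F^{in}\ln F^{in}$, and a direct computation gives
$$
\iint_{\Om\times\bR^N}F^{in}(x,v)|\ln F^{in}(x,v)|\,dxdv = 2N|\bB|^2\ln(1/\varepsilon).
$$
This is the key ingredient not used in the previous proof: the $LlnL$ norm grows only logarithmically in $1/\varepsilon$.

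Next, I would import verbatim the lower bound on the $L^1([0,T]\times\Om\times\bR^N)$ norm of $F(t+\cdot,\cdot,\cdot)-\frac{|\bB|^2}{|\Om|}\cM_{(1,0,\th_w)}$ derived in the proof of Proposition \ref{P-NonDecayL1}, namely
$$
T|\bB|\left(1-\frac{\varepsilon^{2N}|\bB|^2}{|\Om|(2\pi\th_w)^{N/2}}\right)^{+}\left(\min\left(1,\tfrac{R-\varepsilon}{\varepsilon(t+T)}\right)\right)^N.
$$
Combining this lower bound with the hypothesized estimate (\ref{DecayH}) yields
$$
T|\bB|\left(1-\tfrac{\varepsilon^{2N}|\bB|^2}{|\Om|(2\pi\th_w)^{N/2}}\right)^{+}\!\left(\min\left(1,\tfrac{R-\varepsilon}{\varepsilon(t+T)}\right)\right)^N \le 2N|\bB|^2\ln(1/\varepsilon)\,E(t).
$$

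Finally, instead of letting $\varepsilon\to 0^+$, I would choose $\varepsilon$ as a function of $t$ so that the minimum on the left equals $1$, i.e.\ I would take $\varepsilon=\varepsilon(t):=R/(t+T+1)$ (so that $\varepsilon(t+T)\le R-\varepsilon$). For this choice the left-hand side remains bounded below by a fixed constant (depending only on $T$, $|\bB|$, $|\Om|$, $\th_w$, $R$) as $t\to+\infty$, while the logarithmic factor on the right behaves as $\ln(1/\varepsilon)=\ln((t+T+1)/R)\sim\ln t$. Rearranging gives exactly $E(t)\ge C/\ln t$ for $t$ large.

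I do not expect a real obstacle here: the argument is a quantitative refinement of Proposition \ref{P-NonDecayL1}, the only new observation being that the entropy of the concentrating family is $O(\ln(1/\varepsilon))$, so balancing $\varepsilon$ with $t$ (rather than letting $\varepsilon\to 0$ freely) converts the previous contradiction into a quantitative lower bound. The only minor care needed is to verify that $\varepsilon=R/(t+T+1)$ is indeed small enough (for $t$ large) that $F^{in}>1$ on its support, so that the absolute value may be dropped in the entropy computation, and that the factor $(1-\varepsilon^{2N}|\bB|^2/(|\Om|(2\pi\th_w)^{N/2}))^+$ stays bounded away from $0$.
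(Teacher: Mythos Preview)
Your proposal is correct and follows essentially the same route as the paper: the same concentrating initial datum, the same entropy computation $2N|\bB|^2|\ln\varepsilon|$, the same lower bound imported from Proposition \ref{P-NonDecayL1}, and then the choice $\varepsilon\sim 1/t$ (the paper takes $\varepsilon=1/t$ after a preliminary simplification, you take $\varepsilon=R/(t+T+1)$, which is the same scaling). The only cosmetic difference is that you optimize $\varepsilon$ so that the minimum equals $1$ exactly, whereas the paper first absorbs the harmless $\varepsilon$-dependent constants into a single $C$ and then sets $\varepsilon=1/t$.
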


\begin{proof}
For the same initial data as before
$$
\ba
\iint F^{in}(x,v)|\ln F^{in}(x,v)|dxdv&=\iint \frac1{\eps^{2N}}\indc_{\eps B}(x)\indc_{\eps B}(v)\ln\left(\frac1{\eps^{2N}}\right)dxdv
\\
&=2N|\bB|^2|\ln\eps|\,.
\ea
$$

Proceeding as before to obtain a lower bound for the l.h.s. of (\ref{DecayH}), one arrives at the inequality
$$
T|\bB|\left(1-\frac{\eps^{2N}|\bB|^2}{|\Om|(2\pi\th_w)^{N/2}}\right)^+\left(\min\left(1,\frac{(R-\eps)}{\eps(t+T)}\right)\right)^N
	\le 2N|\bB|^2|\ln\eps|E(t)\,.
$$
Choosing $\eps_0>0$ small enough, this inequality takes the form
$$
C\left(\min\left(1,\frac{1}{\eps t}\right)\right)^N\le|\ln\eps|E(t)
$$
for all $t>T$ and all $\eps\in(0,\eps_0)$. Picking then $\eps=1/t$ with $t\gg 1$ so that in particular $t>\max (T,1/\eps_0)$ leads to 
$$
E(t)\ge\frac{C}{\ln t}\quad\hbox{ as }t\to+\infty\,.
$$
\end{proof}

\smallskip
The constraint on the decay rate $E$ obtained in Proposition \ref{P-DecayLlnL} remains the same if one replaces the r.h.s. in the bound 
(\ref{DecayH}) with either
$$
E(t)\iint_{\Om\times\bR^N}F^{in}(x,v)(1+|v|^2+|\ln F^{in}(x,v)|)dxdv
$$
or
$$
E(t)\iint_{\Om\times\bR^N}\left(F^{in}(x,v)\ln\left(\frac{F^{in}}{\bar\rho\cM_{(1,0,\th_w)}}\right)-F^{in}+\bar\rho\cM_{(1,0,\th_w)}\right)(x,v)dxdv\,,
$$
where
$$
\bar\rho=\frac1{|\Om|}\iint_{\Om\times\bR^N} F^{in}dxdv\,.
$$
Indeed, compared to the r.h.s. in the bound (\ref{DecayH}), both these quantities involve in addition the total energy integral
$$
\iint_{\Om\times\bR^N}|v|^2F^{in}(x,v)dxdv\,.
$$
Since the inequality (\ref{DecayH}) is tested with velocity distribution functions concentrated near $v=0$, this additional term is negligible 
and its presence does not change the final result.

\smallskip
Of course, prescribing an entropy bound is not the only possibility for avoiding concentrations in the initial data $F^{in}$. For instance, one 
could ask for a control of $F^{in}$ in some $L^p$-norm with $1<p\le\infty$. In other words, one could investigate the possibility of an inequality 
of the form
\be\lb{DecayLp}
\ba
\left\|F(t+\cdot,\cdot,\cdot)-\frac{\cM_{(1,0,\th_w)}}{|\Om|}\iint_{\Om\times\bR^N} F^{in}dxdv\right\|_{L^1([0,T]\times\Om\times\bR^N)}
\\
\le E(t)\|F^{in}\|_{L^p(\Om\times\bR^N)}\,.
\ea
\ee

\begin{Prop}\lb{P-DecayLp}
If there is a time decay rate $E$ such that the solution of (\ref{BVPIdealGas}) satisfies (\ref{DecayLp}), then it must satisfy
$$
E(t)\ge\frac{C}{t^{N\min(1,2/p')}}\quad\hbox{ as }t\to+\infty\,.
$$
\end{Prop}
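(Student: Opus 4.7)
The plan is to recycle the concentrated family of initial data
$F^{in}(x,v)=\eps^{-2N}\indc_{\eps\bB}(x)\indc_{\eps\bB}(v)$
already used in Propositions \ref{P-NonDecayL1} and \ref{P-DecayLlnL}, together with the geometric lower bound on the l.h.s.\ of the target estimate established in those proofs. The only new computation is on the right-hand side of (\ref{DecayLp}): a direct calculation gives
$$
\|F^{in}\|_{L^p(\Om\times\bR^N)}=|\bB|^{2/p}\eps^{-2N/p'},
$$
where $p'$ denotes the H\"older conjugate of $p$.

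Combining this with the lower bound from the proof of Proposition \ref{P-NonDecayL1}---which depends only on $F^{in}$ and on the geometry of $\Om$, and is therefore valid verbatim---and absorbing the factor
$(1-\eps^{2N}|\bB|^2/(|\Om|(2\pi\th_w)^{N/2}))^+$ into a constant for $0<\eps<\eps_0\ll 1$, one deduces from (\ref{DecayLp}) that
$$
E(t)\ \geq\ C\,\eps^{2N/p'}\left[\min\left(1,\frac{R-\eps}{\eps(t+T)}\right)\right]^N
$$
for all sufficiently small $\eps$ and all $t\geq 0$. It then remains to optimize over $\eps=\eps(t)$.

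The sign of the exponent $2N/p'-N=N(2-p')/p'$ splits the analysis into two regimes, and this optimization is essentially the only new step compared with the preceding proofs. If $p\leq 2$, equivalently $2/p'\leq 1$, one takes $\eps$ of order $1/t$ (for instance $\eps=R/(2(t+T))$), so that for $t$ large the $\min$ equals $1$, yielding $E(t)\geq C't^{-2N/p'}$. If $p>2$, equivalently $2/p'>1$, one fixes $\eps$ equal to a small constant $\eps_1<\eps_0$; then for $t$ large the $\min$ equals $(R-\eps_1)/(\eps_1(t+T))$, yielding $E(t)\geq C''t^{-N}$. The two regimes combine to give $E(t)\geq Ct^{-N\min(1,2/p')}$, which is the desired bound. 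As a consistency check, the exponent $N\min(1,2/p')$ degenerates to $0$ at $p=1$ (recovering Proposition \ref{P-NonDecayL1}), equals $N$ at $p=\infty$, and matches across the crossover $p=2$.
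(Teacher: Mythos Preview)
Your proof is correct and follows essentially the same approach as the paper: the same concentrated initial data, the same computation $\|F^{in}\|_{L^p}=|\bB|^{2/p}\eps^{-2N/p'}$, the same lower bound recycled from Proposition \ref{P-NonDecayL1}, and the same dichotomy in the optimization over $\eps$ (fix $\eps$ when $p\ge 2$, take $\eps\sim 1/t$ when $p<2$). The only cosmetic difference is that you place the borderline case $p=2$ in the $\eps\sim 1/t$ regime while the paper puts it in the fixed-$\eps$ regime, but since $2/p'=1$ there, both yield the same exponent $N$.
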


\begin{proof}
With the same initial data as before
$$
\|F^{in}\|_{L^p(\Om\times\bR^N)}=\frac{|\bB|^{2/p}}{\eps^{2N/p'}}\,,
$$
with $p'=\frac{p}{p-1}<\infty$ the dual exponent of $p$, so that, estimating the l.h.s. by the same argument as before, one arrives at the inequality
$$
T|\bB|\left(1-\frac{\eps^{2N}|\bB|^2}{|\Om|(2\pi\th_w)^{N/2}}\right)^+\left(\min\left(1,\frac{(R-\eps)}{\eps(t+T)}\right)\right)^N
\le\frac{|\bB|^{2/p}}{\eps^{2N/p'}}E(t)\,.
$$
Assume that $p\ge 2$. Since the growth of the l.h.s. in $1/\eps$ is smaller than that of the r.h.s., there is no point in letting $\eps\to 0$. One therefore 
chooses $\eps>0$ small enough but fixed, so that $\eps<R/2$ and $\eps^{2N}|\bB|^2<\tfrac12|\Om|(2\pi\th_w)^{N/2}$, so that 
$$
\frac{T}2|\bB|\left(\min\left(1,\frac{R}{2\eps(t+T)}\right)\right)^N\le\frac{|\bB|^{2/p}}{\eps^{2N/p'}}E(t)\,.
$$
In that case, one sees that there exists $C=C(T,R,\eps)>0$ such that
$$
E(t)\ge\frac{C}{t^N}\quad\hbox{ as }t\gg 1\,.
$$

If $1<p<2$, by picking $\eps_0>0$ small enough, the inequality above takes the form
$$
C\left(\min\left(1,\frac{1}{\eps t}\right)\right)^N\le\frac{|\bB|^{2/p}}{\eps^{2N/p'}}E(t)\,,
$$
for all $t>0$ and all $\eps\in(0,\eps_0)$. Picking $\eps=1/t$ with $t\gg 1$ so that in particular $t>\max(T,1/\eps_0)$, one concludes from this last 
estimate that
$$
E(t)\ge \frac{C}{t^{2N/p'}}\quad\hbox{ as }t\to+\infty\,.
$$
\end{proof}

\smallskip
At this point, it is worth comparing the result in Proposition \ref{P-DecayLp} for the collisionless gas driven to equilbrium by the boundary, and the 
following important result, obtained by Ukai, Point and Ghidouche \cite{UkaiPointGhid78}, for the Boltzmann equation linearized at a uniform 
Maxwellian equilibrium.

Consider the Boltzmann equation linearized at the uniform Maxwellian state $M=\cM_{(1,0,1)}$, assuming hard sphere collisions,
in the case where the spatial domain is the periodic box with unit side $\bT^3$. In other words, consider the Cauchy problem
\be\lb{CauchyPbLinBoltz}
\left\{
\ba
{}&(\d_t+v\cdot\grad_x)g+\cL_Mg=0\,,\quad (t,x,v)\in\bR_+^*\times\bT^3\times\bR^3\,,
\\
&g\rstr_{t=0}=g^{in}
\ea
\right.
\ee
The linearized collision operator is given by
$$
\cL_M\phi(v):=\iint_{\bR^3\times\bS^2}(\phi(v')+\phi(v'_*)-\phi(v)-\phi(v_*))|(v-v_*)\cdot\om|M(v_*)dv_*d\om
$$
with
$$
\left\{
\ba
{}&v'=v-(v-v_*)\cdot\om\om
\\
&v'_*=v_*+(v-v_*)\cdot\om\om
\ea
\right.
$$
where $\om$ runs through the unit sphere $\bS^2$. It is understood that the notation $\cL_M\phi(t,x,v)$ designates $(\cL_M\phi(t,x,\cdot))(v)$. 
Define
$$
\Pi g^{in}=\la g^{in}\ra+\la vg^{in}\ra\cdot v+\la(\tfrac13|v|^2-1)g^{in}\ra\tfrac12(|v|^2-3)
$$
with the notation 
$$
\la\phi\ra=\int_{\bR^3}\phi(v)M(v)dv\,.
$$

\begin{Thm}\cite{UkaiPointGhid78}
There exist $\g_*>0$, and, for each $\g\in]0,\g_*[$, a positive constant $C_\g$ such that, for each $g^{in}\in L^2(\bT^3\times\bR^3;Mdvdx)$,
the solution $g$ of (\ref{CauchyPbLinBoltz}) satisfies the inequality
$$
\left\|g(t,\cdot,\cdot)-\int_{\bT^3}\Pi g^{in}dx\right\|_{L^2(\bT^3\times\bR^3;Mdvdx)}\le C_\g e^{-\g t}\|g^{in}\|_{L^2(\bT^3\times\bR^3;Mdvdx)}
$$
for each $t\ge 0$.
\end{Thm}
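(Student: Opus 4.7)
The plan is to exploit Fourier decomposition on the torus $\bT^3$ to reduce the problem to $L^2(\bR^3;M\,dv)$ at each fixed wave number $k\in\bZ^3$, writing $g(t,x,v)=\sum_{k\in\bZ^3}\hat g(t,k,v)e^{2\pi ik\cdot x}$. The equation (\ref{CauchyPbLinBoltz}) then decouples into a family of ODEs $\d_t\hat g(t,k,\cdot)=A(k)\hat g(t,k,\cdot)$, with generator $A(k):=-2\pi ik\cdot v-\cL_M$ on $L^2(M\,dv)$. By linearity of the orthogonal projection $P$ in $L^2(M\,dv)$ onto $\Ker\cL_M=\mathrm{span}\{1,v_1,v_2,v_3,|v|^2\}$ (the collisional invariants for hard spheres), one has $P=\Pi$, so that $\int_{\bT^3}\Pi g^{in}\,dx=P\hat g^{in}(0,\cdot)$ is exactly the target equilibrium, viewed as the zero-mode hydrodynamic component of the initial data.

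For the zero mode, the evolution reduces to $\d_t\hat g(t,0,\cdot)=-\cL_M\hat g(t,0,\cdot)$. The operator $\cL_M$ is self-adjoint and nonnegative on $L^2(M\,dv)$, and for hard spheres it has the spectral gap $\la\cL_M\phi,\phi\ra_{L^2(M\,dv)}\ge\mu\|\phi-P\phi\|_{L^2(M\,dv)}^2$ for some $\mu>0$ (classically due to Carleman and Grad, with quantitative versions available from Golse-Poupaud or Baranger-Mouhot). Since $P\hat g(t,0,\cdot)=P\hat g^{in}(0,\cdot)$ is conserved, this yields
$$\|\hat g(t,0,\cdot)-P\hat g^{in}(0,\cdot)\|_{L^2(M\,dv)}\le e^{-\mu t}\|\hat g(0,0,\cdot)-P\hat g^{in}(0,\cdot)\|_{L^2(M\,dv)}.$$

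For non-zero modes $k\neq 0$, the difficulty is that $\cL_M$ has a five-dimensional kernel, so dissipation alone cannot drive the solution to zero. The mechanism, as originally exhibited by Ukai-Point-Ghidouche, is spectral: one studies the spectrum of $A(k)$ on $L^2(M\,dv)$. The essential spectrum lies in the half-plane $\{\mathrm{Re}\,\l\le-\nu_0\}$, where $\nu_0>0$ is the infimum of the collision frequency (finite because collisions are hard sphere). The 5-fold eigenvalue at $0$ of $A(0)=-\cL_M$ splits, under the skew-symmetric perturbation $-2\pi ik\cdot v$, into five eigenvalues $\l_j(k)$ whose real parts behave like $-c|k|^2$ for small $|k|$ (the Chapman-Enskog regime, in which the constants $c$ are precisely the shear viscosity and heat conductivity of the linearized fluid system); this is a standard second-order degenerate perturbation calculation. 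For moderate and large $|k|$, a resolvent argument combined with the lower bound $\nu_0>0$ provides a uniform spectral gap. Matching the three regimes produces a rate $\g_*>0$ so that, for every $\g<\g_*$,
$$\|\hat g(t,k,\cdot)\|_{L^2(M\,dv)}\le C_\g e^{-\g t}\|\hat g(0,k,\cdot)\|_{L^2(M\,dv)},\qquad k\neq 0.$$
Parseval's identity then sums these bounds together with the zero-mode estimate to yield the announced inequality.

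The principal obstacle is the uniform spectral gap for $k\neq 0$: the small-$|k|$ analysis is delicate because one must simultaneously follow five eigenvalues that collapse to $0$ and verify the negativity of their real parts via explicit computation of the Chapman-Enskog viscosity matrix, while the matching with the large-$|k|$ regime requires resolvent estimates uniform in $k$. A conceptually cleaner alternative (Mouhot-Neumann, Villani) bypasses the spectral bifurcation analysis by constructing a modified energy functional, equivalent to the $L^2$ norm, whose time derivative controls the hydrodynamic components through commutators between $\cL_M$ and $v\cdot\grad_x$; this hypocoercivity approach delivers the exponential decay directly, at the expense of a less explicit rate.
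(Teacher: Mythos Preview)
The paper does not prove this theorem: it is quoted from \cite{UkaiPointGhid78} purely for comparison with Proposition~\ref{P-DecayLp}, and the paragraph that follows the statement only explains its meaning (spectral gap for $\bL=v\cdot\grad_x+\cL_M$, Fredholm structure of $\cL_M$) without supplying an argument. There is therefore no ``paper's own proof'' to compare against.

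Your sketch is a faithful outline of the original Ukai--Point--Ghidouche strategy (itself in the lineage of Ellis--Pinsky): Fourier decomposition in $x$, mode-by-mode semigroup bounds for $e^{tA(k)}$ via Weyl's theorem to locate the essential spectrum in $\{\Re\l\le-\nu_0\}$ (here $\nu_0=\inf\nu(|v|)>0$ for hard spheres), analytic perturbation of the five-fold eigenvalue $0$ of $-\cL_M$ for small $|k|$ yielding $\Re\l_j(k)\sim -c_j|k|^2$, a uniform resolvent bound for large $|k|$, and Parseval to reassemble. The identification $\Pi=P$ and the treatment of the zero mode are correct. The genuinely nontrivial steps you flag---obtaining a bound on $\|e^{tA(k)}\|$ that is \emph{uniform in $k$} across the three regimes, and in particular turning spectral information into semigroup decay (which requires either a Gearhart--Pr\"uss type argument or explicit resolvent control, since $A(k)$ is not normal)---are indeed where the work lies, and your proposal acknowledges rather than resolves them. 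As a proof \emph{sketch} this is accurate; as a proof it would need those estimates filled in, which is the content of \cite{UkaiPointGhid78}. Your closing remark on the hypocoercivity alternative is also apposite and consistent with the paper's later discussion.
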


This result expresses that there is a spectral gap for the linearized Boltzmann (unbounded) operator
$$
\bL g(x,v)=v\cdot\grad_xg(x,v)+\cL_Mg(x,v)
$$
on $L^2(\bT^3\times\bR^3;Mdvdx)$ with domain 
$$
D(\bL)=\{\phi\in L^2(\bT^3\times\bR^3;Mdvdx)\,|\,v\cdot\grad_x\phi\hbox{ and }(1+|v|)\phi\in L^2(\bT^3\times\bR^3;Mdvdx)\}\,.
$$
In other words, the spectrum of $\bL$ is included in a domain of the form
$$
\hbox{spec}(\bL)\subset\{0\}\cup\{z\in\bC\,|\,\hbox{Re}(z)\ge\th\}
$$
for some $\th>0$ (the spectral gap). The eigenvalue $0$ corresponds to the five-dimensional nullspace of $\bL$, given by
$$
\Ker\bL=\{a+b\cdot v+c\tfrac12(|v|^2-3)\,|\,a,c\in\bR\hbox{ and }b\in\bR^3\}\,.
$$
One reason explaining the spectral gap as above is the fact that $\cL_M$ is an (unbounded) Fredholm operator in the $v$ variable, of the
form
$$
\cL_M\phi(v)=\nu(|v|)\phi(v)-\cK\phi
$$
where $\nu(|v|)$ is a multiplication operator by the collision frequency satisfying the inequality $\nu_*(1+|v|)\le\nu(|v|)\le\nu^*(1+|v|)$ for some 
positive constants $\nu_*$ and $\nu^*$.

Proposition \ref{P-DecayLp} for $p=2$ rules out the possibility of a similar spectral gap property for the problem (\ref{BVPIdealGas}). 

Indeed, assume for simplicity that $\th_w=1$ in the problem (\ref{BVPIdealGas}). Then by the Cauchy-Schwarz inequality
$$
\ba
|\Om|^{1/2}&\left\|\frac{F(t,\cdot,\cdot)}{M}-\frac1{|\Om|}\iint_{\Om\times\bR^3}F^{in}dxdv\right\|_{L^2(\Om\times\bR^3;Mdvdx)}
\\
&\ge
\left\|F(t,\cdot,\cdot)-\frac{M}{|\Om|}\iint_{\Om\times\bR^3}F^{in}dxdv\right\|_{L^1(\Om\times\bR^3;dvdx)}
\ea
$$
so that a spectral gap inequality of the form
\be\lb{SpecGapIdealGas}
\ba
\left\|\frac{F(t,\cdot,\cdot)}{M}-\frac1{|\Om|}\iint_{\Om\times\bR^3}F^{in}dxdv\right\|_{L^2(\Om\times\bR^3;Mdvdx)}
\\
\le
C_\g e^{-\g t}\left\|\frac{F^{in}}{M}\right\|_{L^2(\Om\times\bR^3;Mdvdx)}
\ea
\ee
would entail
$$
\ba
\left\|F(t+\cdot,\cdot,\cdot)-\frac{M}{|\Om|}\iint_{\Om\times\bR^3}F^{in}dxdv\right\|_{L^1([0,T]\times\Om\times\bR^3;dvdxds)}
\\
\le
T|\Om|^{1/2}C_\g e^{-\g t}\left\|\frac{F^{in}}{M}\right\|_{L^2(\Om\times\bR^3;Mdvdx)}\,.
\ea
$$
Now, for 
$$
F^{in}(x,v)=\frac1{\eps^{2N}}\indc_{\eps\bB}(x)\indc_{\eps\bB}(v)\,,
$$
one has, assuming that $0<\eps<1$,
$$
\left\|\frac{F^{in}}{M}\right\|_{L^2(\Om\times\bR^3;Mdvdx)}
\le
((2\pi)^Ne)^{-1/4}\left\|F^{in}\right\|_{L^2(\Om\times\bR^3;dvdx)}\,.
$$

In other words, the existence of a spectral gap for the problem (\ref{BVPIdealGas}) in $\Om\times\bR^3$ --- i.e. the inequality (\ref{SpecGapIdealGas}) 
would entail the inequality
$$
\ba
\left\|F(t+\cdot,\cdot,\cdot)-\frac{\cM_{(1,0,\th_w)}}{|\Om|}\iint_{\Om\times\bR^3}F^{in}dxdv\right\|_{L^1([0,T]\times\Om\times\bR^3;dvdxds)}
\\
\le
T|\Om|^{1/2}((2\pi)^Ne)^{-1/4}C_\g e^{-\g t}\left\|F^{in}\right\|_{L^2(\Om\times\bR^3;dvdx)}\,,
\ea
$$
which stands in contradiction with Proposition \ref{P-DecayLp}.

\section{Algebraic decay}\lb{S-AlgDecay}

After the negative results obtained in the previous section, we now consider the problem of establishing decay estimates compatible
with the results of section \ref{S-NegResult}.

We shall do so on a special class of boundary value problems, satisfying the following assumptions:

\smallskip
\noindent
(a) the spatial domain $\Om$ is a ball of the Euclidian space $\bR^3$;

\noindent
(b) the distribution function of the collisionless gas at $t=0$ is a radial function of both the space and velocity variables;

\noindent
(c) the boundary condition is a diffuse reflection (complete accommodation), with a constant boundary temperature (no temperature
gradient at the boundary of the vessel.)

\smallskip
None of these assumptions excludes the mechanism producing the negative results in section \ref{S-NegResult}. Indeed, all these
results are based on the choice of an initial distribution function that is concentrated, in the velocity variable, near $v=0$, and in the
space variable, near a point that is away from the boundary. This is obviously possible with the symmetries above, by choosing
the initial data concentrated, in the space variable, near the center of the ball, and in the velocity variable, near $v=0$ as in section
\ref{S-NegResult}. Particles so distributed at time $t=0$ will not reach the boundary of the spatial domain until after very late, and,
meanwhile, the exact nature of the boundary condition is immaterial for such particles.

\smallskip
Therefore, we set the spatial domain to be $\Om=B(0,1)$, the unit ball of $\bR^3$. The temperature on $\d\Om$ is set to be 
$\th_w=1$. Consider the initial boundary value problem
\be\lb{BVPSpher}
\left\{
\ba
{}&(\d_t+v\cdot\grad_x)f=0\,,\quad &&(t,x,v)\in\bR_+\times\Om\times\bR^3\,,
\\
&f(t,x,v)=\frac{\la f\ra_+}{\la M\ra_+}M(v)\,,&&v\cdot n_x<0\,,\,\,x\in\d\Om\,,
\\
&f\rstr_{t=0}=f^{in}(|x|,|v|)\,,
\ea
\right.
\ee
with the notation 
$$
\la\phi\ra_+=\int_{v\cdot n_x>0}\phi(v)v\cdot n_xdv\,,
$$
and we recall the notation $M=\cM_{(1,0,1)}$ for the Maxwellian distribution with density $1$, zero bulk velocity and temperature $1$.  

The main result in this section is the following.

\begin{Thm}\lb{T-AlgDecayBoltz}
Assume that $0\le f^{in}(x,v)\le CM(v)$ a.e. in $(x,v)\in\Om\times\bR^3$. Then the solution of (\ref{BVPSpher}) satisfies
$$
\iint_{\Om\times\bR^3}\left|\frac{f(t,x,v)}{M(v)}-\tfrac1{|\Om|}\iint_{\Om\times\bR^3}f^{in}(|x|,|v|)dxdv\right|^pM(v)dv
=O\left(\frac{1}{t^3}\right)+O\left(\frac{1}{t^p}\right)
$$
so that
$$
\left\|\frac{f(t,\cdot,\cdot)}{M}-\tfrac1{|\Om|}\iint_{\Om\times\bR^3}f^{in}(|x|,|v|)dxdv\right\|_{L^p(\Om\times\bR^3;Mdvdx)}
=O\left(\frac{1}{t^{\min(1,3/p)}}\right)
$$
for all $1\le p<\infty$.
\end{Thm}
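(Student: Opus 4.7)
The plan is to use the spherical symmetry to reduce (\ref{BVPSpher}) to a scalar renewal equation on $\bR_+$, analyze that equation with the explicit kernel arising from the ball geometry, and convert the resulting bound on the boundary density back into an $L^p$ bound on $f$. By rotational invariance, the outgoing flux $\la f(t,x,\cdot)\ra_+$ is constant in $x\in\d\Om$, so $r(t):=\la f(t,x,\cdot)\ra_+/\la M\ra_+$ is a scalar function of time and the boundary condition becomes $f(t,x,v)=r(t)M(v)$ on $\Ga_-$. Denote by $\tau(x,v)$ the backward exit time from $\Om$. Solving (\ref{BVPSpher}) along characteristics,
\[
f(t,x,v)=f^{in}(x-tv,v)\indc_{\tau(x,v)>t}+r(t-\tau(x,v))M(v)\indc_{\tau(x,v)\le t}.
\]
Substituting back into the definition of $r$ at a boundary point $x_0$ and passing to the ``return time'' $\tau=2(v\cdot n_{x_0})/|v|^2$ via spherical coordinates in $v$ yields the renewal equation
\[
c_M\,r(t)=S(t)+\int_0^t K(\tau)\,r(t-\tau)\,d\tau,\qquad K(\tau)=\frac{\tau}{4\sqrt{2\pi}}\int_0^{2/\tau}u^5e^{-u^2/2}du,
\]
with $c_M=\la M\ra_+$ and source $S(t)$ coming from $f^{in}$. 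A direct computation gives $\int_0^\infty K\,d\tau=c_M$, $K(\tau)\sim C\tau^{-5}$ as $\tau\to\infty$ (so $K$ has only finitely many finite moments), and $S(t)=O(t^{-4})$ under $f^{in}\le CM$.

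Applying the renewal theorem to the probability kernel $K/c_M$ gives $r(t)\to r_\infty$ as $t\to\infty$. Global mass conservation, together with the fact that the pushforward of $M(v)dvdx$ under $\tau$ has total mass $|\Om|$, identifies $r_\infty=\bar\rho:=|\Om|^{-1}\iint f^{in}dxdv$. Quantitatively, one extracts from the algebraic tails of $K$ and $S$ the bound $|r(t)-\bar\rho|=O(1/t)$ as $t\to\infty$.

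Plugging the characteristic representation into the $L^p(Mdvdx)$ norm splits the integral into contributions $A_1(t)$ and $A_2(t)$ from the sets $\{\tau>t\}$ and $\{\tau\le t\}$. On $\{\tau>t\}$ the integrand is bounded (by $f^{in}\le CM$ and finiteness of $\bar\rho$), and the set is contained in $\{|v|\le 2/t\}$; hence $A_1(t)=O(t^{-3})$. Writing $A_2(t)=\int_0^t|r(t-\sigma)-\bar\rho|^p\tilde W(\sigma)d\sigma$ with $\tilde W$ the pushforward of $M(v)dvdx$ under $\tau$, and splitting at $\sigma=t/2$: on $(0,t/2)$ one uses $|r(t-\sigma)-\bar\rho|\le C/t$ together with $\int_0^{t/2}\tilde W\le|\Om|$ to get $O(t^{-p})$; on $(t/2,t)$ one uses the tail $\int_{t/2}^\infty\tilde W=O(t^{-3})$ together with boundedness of $r$ to get $O(t^{-3})$. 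Summing and taking $p$-th roots yields $\|f/M-\bar\rho\|_{L^p(Mdvdx)}=O(t^{-\min(1,3/p)})$.

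\textbf{Main obstacle.} The crux is the quantitative rate $|r(t)-\bar\rho|=O(1/t)$. Because $K$ has only finitely many finite moments, no exponential decay is available, and one must carefully extract a polynomial rate from the algebraic tails of $K$ and $S$ despite the slow decay of the renewal measure. Identifying the correct limit $r_\infty=\bar\rho$ reflects the eventual uniform thermalization of the slow particles, which spend the longest time between reflections, and is naturally intertwined with this quantitative rate analysis.
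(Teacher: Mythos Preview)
Your proposal is correct and follows essentially the same route as the paper: reduce via spherical symmetry to a renewal equation for the boundary flux, obtain the $O(1/t)$ rate on that flux, and split the $L^p$ integral according to $\{\tau>t\}$ versus $\{\tau\le t\}$ (your split of $A_2$ at $\sigma=t/2$ in the pushforward variable is equivalent to the paper's split at $|v|=4/t$). The step you flag as the main obstacle---the quantitative renewal rate---is handled in the paper by a direct appeal to Theorem~4 of Feller \cite{Feller41}, which yields $\mu(t)-\mu_\infty=o(1/t)$ from the finiteness of $\int_0^\infty t^m K(t)\,dt$ for $m\le 3$ together with the $t^{-4}$ decay of $S$.
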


The reader might want to compare the upper bound in this theorem with the lower bound in Proposition \ref{P-DecayLp}: this is done by 
taking $p=\infty$ in the estimate (\ref{DecayLp}) and in Proposition \ref{P-DecayLp}, and $p=1$ in the theorem above. The difference in
the exponents observed suggests that one of these results at least is not sharp. The numerical investigations in \cite{TsujiAokiFG2010}
suggests that it is the lower estimate in Proposition \ref{P-DecayLp} that is sharp, while the upper bound in the theorem above is not.

\smallskip
Our main task in the proof of this theorem is to use the symmetries in the problem (\ref{BVPSpher}) to reduce it to a renewal integral 
equation for a scalar unknown quantity, and to use classical results for such equations. 

The role of symmetries in this problem is summarized in the following statement.

\begin{Lem}\lb{L-BoundarySym}
If $f$ is the solution of the initial boundary value problem (\ref{BVPSpher}), its outgoing flux at the boundary
$$
\mu:=\int_{v\cdot n_x>0}f(t,x,v)v\cdot n_xdv
$$
is independent of the position variable $x$ as $x$ runs through $\d\Om$, and a function of $t\ge 0$ only.
\end{Lem}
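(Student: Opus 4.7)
The plan is to exploit the full $SO(3)$ symmetry of the initial boundary value problem (\ref{BVPSpher}): the domain $\Omega=B(0,1)$, the Maxwellian $M$ appearing in the boundary condition, and the radial initial datum $f^{in}(|x|,|v|)$ are all invariant under the simultaneous rotation $(x,v)\mapsto(Rx,Rv)$ for any $R\in SO(3)$. The statement that $\mu$ does not depend on $x\in\d\Om$ will follow once we show that the solution itself is invariant under such rotations, because $SO(3)$ acts transitively on $\d\Om=S^2$.

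First I would set, for any $R\in SO(3)$, $f_R(t,x,v):=f(t,Rx,Rv)$ and check that $f_R$ solves the same problem (\ref{BVPSpher}). For the transport equation, the chain rule gives $(\d_t+v\cdot\grad_x)f_R(t,x,v)=(\d_tf)(t,Rx,Rv)+(Rv)\cdot(\grad_xf)(t,Rx,Rv)=0$, using that $v\cdot R^T\xi=(Rv)\cdot\xi$. For the initial datum, $f^{in}(|Rx|,|Rv|)=f^{in}(|x|,|v|)$ because $R$ is an isometry. For the boundary condition, note that at $x\in\d\Om$ we have $n_x=x$ and $n_{Rx}=Rx$, so the half-space $\{v\cdot n_x>0\}$ is mapped to $\{v'\cdot n_{Rx}>0\}$ by $v'=Rv$; together with $M(Rv)=M(v)$ and the change of variables $v'=Rv$ (with unit Jacobian) in the definitions of $\la\cdot\ra_+$, one checks that $f_R$ again satisfies the diffuse reflection relation on $\Ga_-$.

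By uniqueness of the solution to the linear free-transport problem with diffuse reflection boundary condition, $f_R=f$ for every $R\in SO(3)$. Consequently, for any $x_1,x_2\in\d\Om$ there exists $R\in SO(3)$ with $Rx_1=x_2$, and performing the change of variables $w=R^Tv$ in
$$
\mu(t,x_2)=\int_{v\cdot n_{x_2}>0}f(t,x_2,v)\,v\cdot n_{x_2}\,dv,
$$
together with $f(t,Rx_1,Rw)=f(t,x_1,w)$ and $Rw\cdot Rx_1=w\cdot x_1=w\cdot n_{x_1}$, yields $\mu(t,x_2)=\mu(t,x_1)$. Hence $\mu$ is a function of $t$ alone.

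The only step requiring some care is the uniqueness invoked in passing from the invariance of the problem to the invariance of the solution; this is the rationale for the assumption $0\le f^{in}\le CM$ in Theorem \ref{T-AlgDecayBoltz}, which places $f$ in a function class where the linear transport equation with diffuse reflection is well posed and uniqueness holds. Modulo this point, the argument is a direct consequence of the symmetries built into (\ref{BVPSpher}).
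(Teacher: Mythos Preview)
Your argument is correct and follows essentially the same route as the paper's own proof: define $f_R(t,x,v)=f(t,Rx,Rv)$, verify it solves (\ref{BVPSpher}), invoke uniqueness, and conclude that the outgoing flux is rotation-invariant on $\d\Om$. The only cosmetic difference is that the paper works with all of $O_3(\bR)$ rather than $SO(3)$, but since $SO(3)$ already acts transitively on the sphere this makes no difference to the conclusion.
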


\begin{proof}
If $R\!\in\! O_3(\bR)$ is any orthogonal matrix, the function $f_R(t,x,v)\!:=\!f(t,Rx,Rv)$ obviously satisfies $f_R\rstr_{t=0}=f\rstr_{t=0}$,
and $\la f_R\ra_+(t,x)=\la f\ra_+(t,Rx)$, while
$$
\ba
(\d_t+v\cdot\grad_x)f_R(t,x,v)&=\d_tf(t,Rx,Rv)+v\cdot(R^T\grad_xf)(t,Rx,Rv)
\\
&=\d_tf(t,Rx,Rv)+Rv\cdot\grad_xf(t,Rx,Rv)=0\,.
\ea
$$
By uniqueness of the solution of (\ref{BVPSpher}), we conclude that $f_R=f$ for each $R\in O_3(\bR)$. Thus
$$
\la f\ra_+(t,x)=\la f_R\ra_+(t,x)=\la f\ra_+(t,Rx)
$$
for each $(t,x)\in\bR_+\times\d\Om$ and $R\in O_3(\bR)$: hence $\mu:=\la f\ra_+$ is independent of $x$ as $x$ runs through 
$\d\Om$.
\end{proof}

\smallskip
\begin{proof}[Proof of Theorem \ref{T-AlgDecayBoltz}]

With Lemma \ref{L-BoundarySym}, we easily arrive at an integral equation satisfied by $\mu$. Pick $x\in\d\Om$, and set 
$\th=\widehat{(n_x,v)}$; the backward exit time 
$$
\tau_{x,v}:=\inf\{t>0\,|\,x-tv\notin\overline{\Om}\}
$$
is given by the explicit formula
$$
\tau_{x,v}=\frac{2\cos\th}{|v|}\,,
$$
as can be seen on Figure 1.

\begin{figure}

\includegraphics[width=10cm]{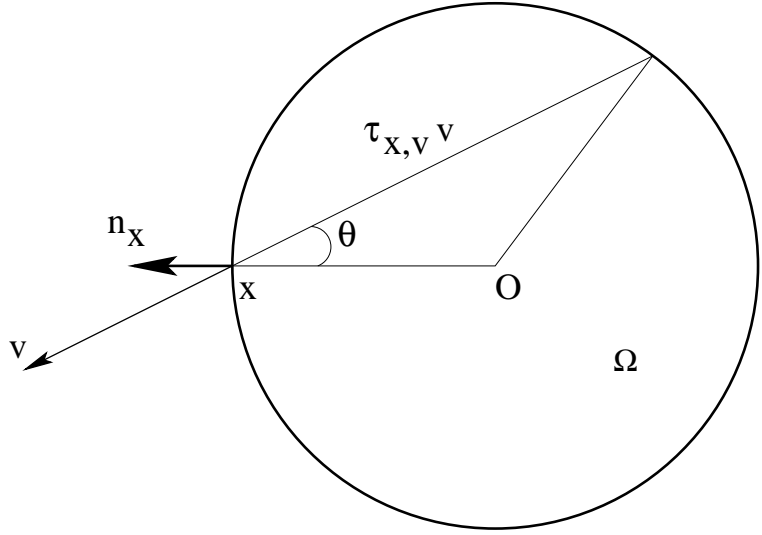}

\caption{The exit time for a spherical domain}

\end{figure}

By the method of characteristics, for each $x\in\d\Om$ and $v\in\bR^3$ such that $v\cdot n_x>0$, one has
$$
f(t,x,v)=\indc_{t>\tau_{x,v}}\frac{M(v)}{\la M\ra_+}\mu(t-\tau_{x,v})+\indc_{t<\tau_{x,v}}f^{in}(|x-tv|,|v|)\,.
$$
Hence
$$
\ba
\mu(t)&=\int_{v\cdot n_x>0}f(t,x,v)v\cdot n_xdv
\\
&=\frac1{\la M\ra_+}\int_{v\cdot n_x>0}\mu(t-\tau_{x,v})\indc_{t>\tau_{x,v}}M(v)v\cdot n_xdv
\\
&+
\int_{v\cdot n_x>0}f^{in}(|x-tv|,|v|)\indc_{t<\tau_{x,v}}v\cdot n_xdv
\ea
$$
Introducing spherical coordinates in $v$, with $n_x$ being the polar direction and $\th$ the colatitude, we see on Figure 1 that, 
for each $x\in\d\Om$, one has $\tau_{x,v}|v|=2\cos\th$, so that
$$
\ba
\int_{v\cdot n_x>0}&\mu(t-\tau_{x,v})\indc_{t>\tau_{x,v}}M(v)v\cdot n_xdv
\\
&=
2\pi\int_0^\infty\int_0^{\pi/2}\mu\left(t-\frac{2\cos\th}{|v|}\right)\indc_{t|v|>2\cos\th}M(|v|)|v|\cos\th|v|^2\sin\th d\th d|v|\,,
\ea
$$
abusing the notation $M(|v|)$ to designate the radial function $v\mapsto M(v)$. 

Substituting $y=\cos\th$, this integral is expressed as
$$
\ba
\int_0^\infty\int_0^1&\mu\left(t-\frac{2y}{|v|}\right)\indc_{t|v|>2y}M(|v|)|v|^3ydyd|v|
\\
&=\tfrac14\int_0^\infty\int_0^{2/|v|}\mu\left(t-\tau\right)\indc_{t>\tau}M(|v|)|v|^5\tau d\tau d|v|
\\
&=\tfrac14\int_0^\infty\int_0^t\mu\left(t-\tau\right)\indc_{\tau<2/|v|}M(|v|)|v|^5\tau d\tau d|v|
\\
&=\tfrac14\int_0^t\mu\left(t-\tau\right)\left(\int_0^{2/\tau}M(|v|)|v|^5d|v|\right)\tau d\tau\,.
\ea
$$
We henceforth define
$$
K(\tau):=\frac{\pi\tau}{2\la M\ra_+}\int_0^{2/\tau}M(|v|)|v|^5d|v|\,.
$$
The function $K$ is continuous on $\bR_+^*$, satisfies $K(\tau)\ge 0$ for each $\tau\ge 0$, and
$$
K(\tau)\to 0\hbox{Ê as }\tau\to 0^+\,,\quad K(\tau)\sim\frac{8}{3\tau^5}\hbox{ as }\tau\to+\infty\,.
$$
Therefore $\mu$ satisfies
\be\lb{Renewal}
\mu(t)=\int_0^tK(\tau)\mu(t-\tau)d\tau+S(t)
\ee
where  the source term is
$$
S(t):=\int_{v\cdot n_x>0}f^{in}(|x-tv|,|v|)\indc_{t<\tau_{x,v}}v\cdot n_xdv\,.
$$
This is a renewal equation, to which we shall apply the results of \cite{Feller41}.

Next we consider the source term. Define 
$$
\Phi(|v|):=\Supess_{x\in\Om}f^{in}(|x|,|v|)\,;
$$
assuming that $0\le f^{in}\in L^\infty(\Om\times\bR^3)$ implies that $\Phi\in L^\infty(\bR_+)$. Then, 
$$
\ba
S(t)&\le
\int_{v\cdot n_x>0}\Phi(|v|)\indc_{t<\tau_{x,v}}v\cdot n_xdv
\\
&=
2\pi\int_0^\infty\Phi(|v|)|v|^3\left(\int_0^{\pi/2}\indc_{t|v|<2\cos\th}\cos\th\sin\th d\th\right)d|v|
\\
&=
2\pi\int_0^\infty\Phi(|v|)|v|^3\left(\int_0^1\indc_{t|v|<2y}ydy\right)d|v|
\\
&=
\pi\int_0^\infty\Phi(|v|)|v|^3(1-\tfrac14t^2|v|^2)_+d|v|
\\
&\le
\pi\|\Phi\|_{L^\infty}\int_0^{2/t}|v|^3d|v|=\frac{4\pi\|\Phi\|_{L^\infty}}{t^4}\,.
\ea
$$

Apply Theorem 4 of \cite{Feller41}: with $n=3$, one has
$$
\int_0^\infty t^mK(t)dt<+\infty\quad\hbox{ for }m=0,1,2,3
$$
while
$$
S(t)=o\left(\frac1t\right)\,,\quad\int_t^\infty S(\tau)d\tau=o\left(\frac1t\right)
$$
as $t\to+\infty$. Then 
$$
\mu(t)\to\mu_\infty=\frac{\displaystyle\int_0^\infty S(\tau)d\tau}{\displaystyle\int_0^\infty\tau K(\tau)d\tau}
$$
as $t\to+\infty$, and
$$
\mu(t)-\mu_\infty=o\left(\frac1t\right)\,.
$$

Now let $g:=f/M$, where $f$ is the solution of (\ref{BVPSpher}); $g$ satisfies
\be\lb{BVPSpher/M}
\left\{
\ba
{}&(\d_t+v\cdot\grad_x)g=0\,,\quad &&(t,x,v)\in\bR_+\times\Om\times\bR^3\,,
\\
&g(t,x,v)=\sqrt{2\pi}\mu(t)\,,&&v\cdot n_x<0\,,\,\,x\in\d\Om\,,
\\
&g\rstr_{t=0}=f^{in}(|x|,|v|)/M(v)\,.
\ea
\right.
\ee
The function $g$ is given by
$$
g(t,x,v)=\sqrt{2\pi}\mu(t-\tau_{x,v})\indc_{t>\tau_{x,v}}+\frac{f^{in}(|x|,|v|)}{M(v)}\indc_{t<\tau_{x,v}}\,,
$$
so that
$$
\ba
g(t,x,v)-\sqrt{2\pi}\mu_\infty&=\sqrt{2\pi}\left(\mu(t-\tau_{x,v})-\mu_\infty\right)\indc_{t>\tau_{x,v}}
\\
&+\left(\frac{f^{in}(|x|,|v|)}{M(v)}-\sqrt{2\pi}\mu_\infty\right)\indc_{t<\tau_{x,v}}=I+II\,.
\ea
$$

Now $\tau_{x,v}\le 2/|v|$, so that 
$$
\int_{\bR^3}\indc_{t<\tau_{x,v}}M(v)dv\le\int_{|v|\le 2/t}M(v)dv\sim\tfrac{2^3|\bB^3|}{(2\pi)^{3/2}}\frac1{t^3}\,.
$$
Hence
\be\lb{BoundI}
\int_{\bR^3}|II|^pM(v)dv=O\left(\frac{1}{t^3}\right)\,.
\ee

On the other hand
$$
\ba
\int_{\bR^3}|I|^pM(v)dv\le\int_{\bR^3}\frac{C\indc_{t>\tau_{x,v}}}{(t-\tau_{x,v}+1)^p}M(v)dv
\\
\le\int_{|v|<\eps}\frac{C}{((t-\frac2{|v|})_++1)^p}M(v)dv+\int_{|v|\ge\eps}\frac{C}{((t-\frac2{|v|})_++1)^p}M(v)dv
\\
\le C\int_{|v|<\eps}M(v)dv+\frac{C}{((t-\frac2\eps)_++1)^p}\int_{|v|\ge\eps}M(v)dv
\\
\le C'\eps^3+\frac{C'}{(\frac{t}2+1)^p}
\ea
$$
for some other constant $C'>0$, provided that $t\ge 4/\eps$. Choosing $\eps=4/t<1$, we arrive at
\be\lb{BoundII}
\int_{\bR^N}|I|^pM(v)dv\le\frac{4^3C'}{t^3}+\frac{C'}{t^p}\,.
\ee
Putting together (\ref{BoundI})-(\ref{BoundII}) entails the first estimate in the theorem. The second follows from the first and interpolation with
the maximum principle for the solution $g$ of (\ref{BVPSpher/M}).

Finally, the fact that
$$
\sqrt{2\pi}\mu_\infty=\tfrac1{|\Om|}\iint_{\Om\times\bR^3}f^{in}(|x|,|v|)dxdv
$$
follows from the conservation of mass for the solution of (\ref{BVPSpher}).
\end{proof}

\smallskip
Notice that the decay requirements on the renewal kernel in Feller's Theorem 4 \cite{Feller41} forbid extending our proof to space dimensions
less than $3$. In the one-dimensional case, Yu obtained in \cite{Yu09} an algebraic rate of convergence for the outgoing flux (specifically, of
order $t^{-1/10}$), by probabilistic arguments significantly more involved than the above proof.

\section{The monokinetic case: exponential decay}

The discussion in section \ref{S-NegResult} shows the existence of too many slow particles is the reason for the slow return to equilibrium in the 
case of a collisionless gas in a container with constant wall temperature. In the present section, we confirm this effect by proving that monokinetic
populations of particles in the same conditions converge to equilibrium exponentially fast.

Radiative transfer provides a natural example of a monokinetic transport process. Consider the following (academic) problem: at time $0$, a source
emits radiation in an empty container surrounded by infinitely opaque material. Photons propagate at the speed of light, and are instantaneously
thermalized on the wall. Denoting $I\equiv I(t,x,\om,\nu)$ the radiative intensity at time $t$, position $x$, in the direction $\om$ and with frequency
$\nu$, one arrives at the following initial-boundary value problem:
\be\lb{BVPRad}
\left\{
\begin{array}{ll}
\tfrac1c\d_t I+\om\cdot\grad_xI=0\,,&\quad x\in\Om\,,\,\,|\om|=1\,,\,\,\nu>0\,,
\\	\\
I(t,x,\om,\nu)=B_\nu(\th_I(t,x))\,,&\quad x\in\d\Om\,,\,\,\om\cdot n_x<0\,,\,\,\nu>0\,,
\\	\\
I\rstr_{t=0}=I^{in}\,.&
\end{array}
\right.
\ee
Here $c>0$ is the speed of light in vacuum, $\Om$ is a bounded open set in $\bR^3$ with $C^1$ boundary, $n_x$ is the outward unit normal at
$x\in\d\Om$. The notation $B_\nu(\th)$ designates Planck's function for the radiative intensity emitted by a black body at temperature $\th$, i.e.
$$
B_\nu(\th)=\frac{2h\nu^3}{c^2}\frac{1}{e^{h\nu/k\th}-1}\,,
$$
where $h$ and $k$ are respectively the Planck and the Boltzmann constants. The temperature $\th_{I}$ is defined by
$$
\ba
\iint_{\bS^2\times\bR_+}&I(t,x,\om,\nu)(\om\cdot n_x)_+d\om d\nu
\\
&=\iint_{\bS^2\times\bR_+}B_\nu(\th_I(t,x))(\om\cdot n_x)_+d\om d\nu=:\si\th_I(t,x)^4
\ea
$$
where $\si$ is the Stefan-Boltzmann constant 
$$
\si:=\frac{2\pi k^4}{c^2h^3}\int_0^\infty\frac{x^3dx}{e^x-1}=\frac{2\pi^5k^4}{15c^2h^3}\,.
$$
By definition of $\th_I$, the net flux of energy at each point of the wall is 
$$
\iint_{\bS^2\times\bR_+}I(t,x,\om,\nu)\om\cdot n_xd\om d\nu=0\,,
$$
so that the total energy in the container is conserved: for each $t\ge 0$, one has
$$
\iiint_{\Om\times\bS^2\times\bR_+}I(t,x,\om,\nu)dxd\om d\nu=\iiint_{\Om\times\bS^2\times\bR_+}I^{in}(x,\om,\nu)dxd\om d\nu\,.
$$
Therefore, one expects that, as $t\to+\infty$, the radiation inside the container thermalizes so that
$$
I(t,x,\om,\nu)\to B_\nu(\th_\infty)\quad\hbox{Êas }t\to+\infty\,,
$$
with
$$
4\si|\Om|\th_\infty^4:=4\pi|\Om|\int_0^\infty B_\nu(\th_\infty)d\nu=\iiint_{\Om\times\bS^2\times\bR_+}I^{in}(x,\om,\nu)dxd\om d\nu\,.
$$
In this section, we seek the rate of convergence to equilibrium for the problem above.

\begin{Thm}\lb{T-ExpDecayRad}
Assume that $I^{in}$ is smooth and satisfies $\Supp(I^{in})\subset\Om\times\bR_+$ together with the bound $0\le I^{in}(x,\nu)\le B_\nu(\Theta)$
with $\Theta>0$, for all $x\in\Om$ and $\nu\ge 0$. Then, the solution $I$ of the initial boundary value problem (\ref{BVPRad}) satisfies
$$
I(t,x,\om,\nu)-B_\nu(\th_\infty)=O(e^{-\a't})
$$
as $t\to+\infty$, uniformly on $\Om\times\bS^1\times\bR_+$, for each $\a'<\a$, where $\th_\infty\ge 0$ is given by
$$
\th_\infty^4:=\frac{\pi}{\si|\Om|}\iint_{\Om\times\bS^2}I^{in}(|x|^2,\nu)dxd\nu
$$
while $\a$ is
$$
\a=\inf\left\{-\Re(z)\,|\,z\in\bC^*\,\hbox{ such that }\,\tfrac12\int_0^2 e^{-sz}sds=1\right\}\,.
$$
\end{Thm}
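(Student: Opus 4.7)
The plan is to use the rotational symmetry of the setup (implicit in the hypothesis $I^{in}=I^{in}(|x|^2,\nu)$) to reduce to a scalar renewal equation on the wall temperature, and then to extract exponential decay from Laplace transform analysis, exploiting the fact that the renewal kernel has compact support. After normalising so that $\Om=B(0,1)$ and $c=1$, the symmetry argument of Lemma \ref{L-BoundarySym} applies essentially verbatim and shows that $\th_I(t,x)$ is independent of $x\in\d\Om$; write $T(t)$ for its common value. The method of characteristics then yields, for $x\in\d\Om$ and $\om\cdot n_x>0$,
$$
I(t,x,\om,\nu)=B_\nu(T(t-\tau_{x,\om}))\indc_{t>\tau_{x,\om}}+I^{in}(|x-t\om|^2,\nu)\indc_{t<\tau_{x,\om}}\,,
$$
with $\tau_{x,\om}=2\cos\widehat{(n_x,\om)}$ as in Figure 1.

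Substituting this identity into $\si T(t)^4=\iint I(t,x,\om,\nu)(\om\cdot n_x)_+d\om d\nu$ and invoking Stefan-Boltzmann in the form $\int_0^\infty B_\nu(T)d\nu=\frac{\si}{\pi}T^4$, the potentially nonlinear coupling through Planck's law collapses into a \emph{linear} equation for $u(t):=T(t)^4$. Computing the spherical integral as in the proof of Theorem \ref{T-AlgDecayBoltz} (with $y=\cos\th$ and $s=2y$) gives
$$
u(t)=\int_0^{\min(2,t)}k(s)u(t-s)ds+g(t)\,,\qquad k(s):=\tfrac12 s\indc_{[0,2]}(s)\,,
$$
where $g$ depends only on the initial data and is supported in $[0,2]$. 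This is a critical renewal equation ($\int_0^2 k=1$), and the limiting value is exactly $u_\infty=\th_\infty^4$ as prescribed.

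Set $v:=u-u_\infty$, so that $v=k*v+G$ with $G(t):=g(t)-u_\infty\int_t^\infty k(s)ds$ supported in $[0,2]$ and satisfying $\int_0^\infty G(t)dt=0$ (the latter being exactly the requirement that fixes $u_\infty$). Since $k$ and $G$ both have compact support, their Laplace transforms are entire, and
$$
\hat v(z)=\frac{\hat G(z)}{1-\hat k(z)}\,.
$$
Both numerator and denominator vanish simply at $z=0$ (the denominator because $\int_0^2 s\,k(s)ds=\tfrac43\ne 0$, the numerator because $\hat G(0)=0$), so $\hat v$ extends to an entire function. For $\Re(z)\ge 0$ with $z\ne 0$ one has $|\hat k(z)|<\hat k(\Re z)\le 1$ strictly, hence $\hat k(z)=1$ admits no root in the closed right half-plane except $z=0$; combined with the fact that $\hat k$ is entire of exponential type, this forces $\a>0$. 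Shifting the Bromwich inversion contour past the finitely many roots in any strip $-\a'<\Re z\le 0$ (there are none for $\a'<\a$) yields $|v(t)|\le C_{\a'}e^{-\a't}$ for every $\a'<\a$, whence by smoothness of $x\mapsto x^{1/4}$ at $\th_\infty^4$, one obtains $|T(t)-\th_\infty|=O(e^{-\a't})$.

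Finally, the bound is propagated to the interior via the characteristic formula: for $t\ge 2$ every backward characteristic from $(x,\om)\in\Om\times\bS^2$ has already exited, so $I(t,x,\om,\nu)=B_\nu(T(t-\tau_{x,\om}))$ with $\tau_{x,\om}\le 2$, and a mean value estimate on $B_\nu$ in its temperature argument transfers the bound from $T$ to $I$, uniformly in $(x,\om,\nu)$. The hard part will be the root location for $\hat k(z)=1$: ruling out roots on $i\bR\setminus\{0\}$ requires the strict positivity of $k$ on an interval, and the decay estimate in $\a'$ relies on the entire character of $\hat k$ (itself a consequence of the compact support of $k$, which is the monokinetic analogue of the absence of slow particles responsible for the algebraic losses in Section \ref{S-NegResult}).
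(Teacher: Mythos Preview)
Your approach is essentially the paper's: reduce by symmetry to a scalar renewal equation with kernel $K(s)=\tfrac12 s\indc_{[0,2]}(s)$, then exploit the compact support of $K$ to push the Laplace inversion contour into $\Re z<0$. The only substantive slip is the claim that $\hat v$ ``extends to an entire function'': cancelling the simple zero at $z=0$ gives holomorphy only on $\{\Re z>-\a\}$, since $1-\hat k$ has (infinitely many) zeros further left---your next sentence shows you understand this, so it is a wording error rather than a gap. The paper carries out the inversion step more carefully via Hardy spaces $\cH^2(\Pi_{-\a'})$ (using that $S\in C^1_c$, so $z\tilde S=\widetilde{S'}\in\cH^2$), which supplies the decay of $\hat v$ along vertical lines that your contour-shifting needs but does not verify; under the stated smoothness and compact-support hypotheses on $I^{in}$ your $G$ is indeed smooth with compact support, so this is easily filled in.
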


\begin{proof}
The problem is simplified by considering the radiative intensity integrated over frequencies, the grey intensity $u$ defined as
$$
u(t,x,\om):=\int_0^\infty I(t,x,\om,\nu)d\nu\,.
$$
The formula defining $\th_I$ reads
$$
\int_{\bS^2}u(t,x,\om)(\om\cdot n_x)_+d\om=\si\th_I(t,x)^4
$$
while
$$
\int_0^\infty B_\nu(\th_I)d\nu=\frac{\si}{\pi}\th_I^4\,.
$$
Therefore, choosing for simplicity units of time and space so that $c=1$, the grey intensity $u$ satisfies
\be\lb{BVPGreyRad}
\left\{
\begin{array}{ll}
\d_t u+\om\cdot\grad_xu=0\,,&\quad x\in\Om\,,\,\,|\om|=1\,,
\\	\\
u(t,x,\om)=\tfrac1\pi\displaystyle\int_{\bS^2}u(t,x,\om')(\om'\cdot n_x)_+d\om'&\quad x\in\d\Om\,,\,\,\om\cdot n_x<0\,,
\\	\\
u\rstr_{t=0}=u^{in}:=\displaystyle\int_0^\infty I^{in}d\nu\,.&
\end{array}
\right.
\ee

Henceforth, we assume the same geometric setting as in section \ref{S-AlgDecay}: $\Om=B(0,1)\subset\bR^3$, and $I^{in}$ is radial in $x$ and
isotropic, meaning that $I^{in}\equiv I^{in}(|x|^2,\nu)$. In that case, reasoning as in Lemma \ref{L-BoundarySym} shows that 
$$
f(t):=\tfrac1\pi\displaystyle\int_{\bS^2}u(t,x,\om')(\om'\cdot n_x)_+d\om'
$$ 
is independent of the position $x\in\d\Om$. 

Proceeding as in section \ref{S-AlgDecay}, we derive a renewal integral equation for $f$. Indeed, integrating the transport equation in
(\ref{BVPGreyRad}) by the method of characteristics, we see that
\be\lb{CharGreyRad}
u(t,x,\om)=\indc_{t\le\tau_{x,\om}}u^{in}(|x-t\om|^2)+\indc_{t>\tau_{x,\om}}f(t-\tau_{x,\om})
\ee
whenever $x\in\d\Om$ with $\om\cdot n_x>0$, so that
$$
\ba
f(t)&=\tfrac1\pi\int_{\bS^2}u(t,x,\om)(\om\cdot n_x)_+d\om
\\
&=\tfrac1\pi\int_{\bS^2}\indc_{t\le\tau_{x,\om}}u^{in}(|x-t\om|^2)(\om\cdot n_x)_+d\om
\\
&+
\tfrac1\pi\int_{\bS^2}\indc_{t>\tau_{x,\om}}f(t-\tau_{x,\om})(\om\cdot n_x)_+d\om
\\
&=S(t)+\int_0^tK(s)f(t-s)ds\,.
\ea
$$
Here
$$
\ba
\int_0^tK(s)f(t-s)ds
=
\tfrac1\pi\int_{\bS^2}\indc_{t>\tau_{x,\om}}f(t-\tau_{x,\om})(\om\cdot n_x)_+d\om
\\
=
\tfrac1\pi\int_0^{\pi/2}\indc_{t>2\cos\th}f(t-2\cos\th)\cos\th\cdot2\pi\sin\th d\th
\\
=
\int_0^2\indc_{t>s}f(t-s)\tfrac12sds
\\
=\int_0^tK(s)f(t-s)ds
\ea
$$
so that
$$
K(s)=\tfrac12 s\indc_{s<2}\,,
$$
while
$$
\ba
S(t):=\tfrac1\pi\int_{\bS^2}\indc_{t\le\tau_{x,\om}}u^{in}(|x-t\om|^2)(\om\cdot n_x)_+d\om
\\
=\tfrac1\pi\int_{\bS^2}\indc_{t\le\tau_{x,\om}}u^{in}(1+t^2-2tx\cdot\om)(\om\cdot n_x)_+d\om
\\
=\tfrac1\pi\int_0^{\pi/2}\indc_{t\le 2\cos\th}u^{in}(1+t^2-2t\cos\th)\cos\th\cdot 2\pi\sin\th d\th
\\
=\tfrac12\int_0^2\indc_{t\le s}u^{in}(1+t^2-ts)sds
\\
=\int_0^{+\infty}u^{in}(1-t\tau)K(t+\tau)d\tau\,.
\ea
$$

Since $u^{in}\in L^\infty(\Om)$, and $K$ is bounded with support in $[0,2]$, so is $S$ and one has
$$
\int_0^\infty t^mK(t)dt<+\infty\quad\hbox{ and }S(t)=O(t^{-m}) \hbox{ as }t\to+\infty \hbox{ for each }m\ge 1\,.
$$
Applying Theorem 4 of \cite{Feller41} already implies that 
$$
f(t)\to f_\infty:=\frac{\displaystyle\int_0^\infty S(t)dt}{\displaystyle\int_0^\infty tK(t)dt}\quad\hbox{ as }t\to+\infty\,,
$$
and that $f(t)-f_\infty$ is rapidly decaying as $t\to+\infty$:
$$
|f(t)-f_\infty|=O(t^{-m})\quad\hbox{ as }t\to+\infty\hbox{ for each }m\ge 1\,.
$$

We improve this estimate and obtain exponential convergence, as follows. Given $\phi\in L^2(\bR_+)$, we denote its Laplace transform by
$$
\tilde\phi(z):=\int_0^\infty e^{-zt}\phi(t)dt\,.
$$
We recall that the Laplace transform $\phi\mapsto\tilde\phi$ is an isomorphism from $L^2(\bR_+)$ to the Hardy space $\cH^2(\Pi_0)$, where
$\Pi_a:=\{z\in\bC\,|\,\Re(z)>a\}$. We recall that an element of $\cH^2(\Pi_a)$ is a holomorphic function $\psi\equiv\psi(z)$ on the half-plane 
$\Pi_a$ that satisfies
$$
\sup_{x>a}\int_\bR|\psi(x+iy)|^2dy<+\infty\,.
$$

Applying the Laplace transform to both sides of the renewal equation
\be\lb{RenewRad}
f(t)=S(t)+\int_0^tK(s)f(t-s)ds\,,\qquad t\ge 0
\ee
satisfied by $f$, we see that
\be\lb{RenewLap}
\tilde f(z)(1-\tilde K(z))=\tilde S(z)\,,\quad\Re(z)>0\,.
\ee

Since $K\in L^\infty(\bR)$ with support in $[0,2]$, the Laplace transform of $K$ is entire holomorphic; it is given by
$$
\tilde K(z)=\frac{1-e^{-2z}(1+2z)}{2z^2}\,,\quad z\not=0\,,\qquad \tilde K(0)=1\,.
$$
Since $\tilde K'(0)=-\tfrac43$, the only zero of $1-\tilde K$ near $z=0$ is $0$. Besides
$$
|\tilde K(z)|=\int_0^\infty e^{-t\Re(z)}K(t)dt<1\qquad\hbox{ whenever }\Re(z)>0\,,
$$
since $K$ is a probability distribution on $\bR_+$. Furthermore
$$
\Re(\tilde K(iy))=\int_0^\infty\cos(ty)K(t)dt<\int_0^\infty K(t)dt=1
$$
so that $z=0$ is the only zero of $1-K$ on the imaginary axis. Finally, for each $a\in\bR$, one has
\be\lb{Estim|LapK|}
|\tilde K(z)|\le\frac1{2|z|^2}(1+(1+2|z|)e^{-2\Re(z)})\,.
\ee
In particular $|\tilde K(z)|\to 0$ as $\Im(z)\to+\infty$ uniformly in $\Re(z)\ge a$. Therefore, there exists $\a>0$ such that 
$$
1-\tilde K(z)=0\hbox{ and }\Re(z)>-\a\quad\Leftrightarrow\quad z=0\,.
$$
Hence $z\mapsto\frac{z}{1-\tilde K(z)}$ is holomorphic on the half-plane $\Pi_{-\a}$, and one recasts (\ref{RenewLap}) as
$$
z\tilde f(z)=\frac{z\tilde S(z)}{1-\tilde K(z)}\,,\quad\Re(z)>-\a\,.
$$
Indeed, since $S\in L^\infty(\bR)$ with support in $[0,2]$, its Laplace transform is an entire holomorphic function. 

Moreover, assuming that $u^{in}\in C^1_c((0,1))$, we deduce from the formula
$$
S(t)=\int_t^\infty u^{in}(1+t^2-ts)K(s)ds
$$
that $S\in C^1(\bR_+)$, with
$$
S'(t)=\int_t^\infty (u^{in})'(1+t^2-ts)(2t-s)K(s)ds
$$
so that, for all $t\ge 0$, one has
$$
|S'(t)|\le 2t\|(u^{in})'\|_{L^\infty}\int_t^\infty K(s)ds\le 4\|(u^{in})'\|_{L^\infty}\,.
$$
Since $S(0)=u^{in}(1)=0$, one has $z\tilde S(z)=\widetilde{S'}(z)$; and since $S'$ is continuous with compact support on $\bR$, its Laplace
transform $\widetilde{S'}\in\cH^2(\Pi_a)$ for each $a\in\bR$. 

In view of (\ref{Estim|LapK|}), for each $\a'<\a$, the function $z\mapsto(1-\tilde K(z))^{-1}$ is bounded on $\Pi_{-\a'}\setminus B(0,\a'/2)$, so that 
the function 
$$
z\mapsto\frac{z\tilde S(z)}{1-\tilde K(z)}\hbox{ belongs to }\cH^2(\Pi_{-\a'})
$$
for each $\a'<\a$. 

Since $\widetilde{e^{at}\phi}=\tilde\phi(z-a)$, this implies the existence of a function $g$ such that $e^{\a't}g\in L^2(\bR_+)$ for each $\a'<\a$ 
and
$$
\tilde g(z)=\frac{z\tilde S(z)}{1-\tilde K(z)}\,,\quad\hbox{ whenever }\Re(z)>-\a'\,.
$$
Consider the function $G$ defined by
$$
G(t)=\int_0^tg(s)ds\,,\quad t\ge 0\,.
$$
Then $G$ is continuous on $\bR$ with support in $\bR_+$, satisfies 
$$
0\le|G(t)|\le\frac{\|e^{\a't}g\|_{L^2}}{\sqrt{2\a'}}\,,\quad t\ge 0\,,
$$
so that $\tilde G$ belongs to $\cH^2(\Pi_\eta)$ for each $\eta>0$. 

Now, for each $z\in\Pi_\eta$
$$
\ba
z\tilde G(z)&=\int_0^\infty ze^{-zt}\int_0^tg(s)dsdt=\int_0^\infty g(s)\int_s^\infty ze^{-zt}dtds
\\
&=\int_0^\infty g(s)e^{-zs}ds=\tilde g(z)=\frac{z\tilde S(z)}{1-\tilde K(z)}=z\tilde f(z)\,.
\ea
$$
In other words, $G$ and $f$ are bounded continuous functions on $\bR$ supported in $\bR_+$, whose Laplace transforms coincide on $\Pi_0$.
Therefore 
$$
f(t)=G(t)=\int_0^tg(s)ds\,,\qquad\hbox{ for each }t\ge 0\,,
$$
and since $e^{\a't}g\in L^2(\bR_+)$ for each $\a'<\a$, one conclude that
$$
\ba
|f(t)-f_\infty|&=\int_t^{+\infty}|g(s)|ds\le\|e^{\a' t}g\|_{L^2}\left(\int_t^{+\infty}e^{-\a' s}ds\right)^{1/2}
\\
&=\frac{\|e^{\a' t}g\|_{L^2}}{\sqrt{2\a'}}e^{-\a' t}\le C_{\a'}e^{-\a' t}
\ea
$$
for each $t>0$.

Going back to the original problem (\ref{BVPRad}) or (\ref{BVPGreyRad}), we deduce from the method of charateristics (\ref{CharGreyRad}) that
$$
u(t,x,\om)=f(t)\,,\quad\hbox{ whenever }t>\hbox{diam}(\Om)\,,
$$
or equivalently, in the frequency dependent case,
$$
I(t,x,\om,\nu)=B_\nu\left((\tfrac{\pi}{\si}f(t))^{1/4}\right)\,,\quad\hbox{ whenever }t>\frac{\hbox{diam}(\Om)}{c}\,,
$$
since $\tau_{x,\om}\le\hbox{diam}(\Om)$. 
\end{proof}

\smallskip
Before concluding, it is worth discussing the role of the monokinetic assumption in the exponential decay estimate obtained here.

Intrinsically, the integral kernel $K(s)ds$ in the proof of Theorem \ref{T-ExpDecayRad} is the image under $\om\mapsto\tau_{x,\om}$ of the 
probability measure proportional to $(\om\cdot n_x)_+d\om$ on $\bS^2$. Since $0\le\tau_{x,\om}\le 2=\hbox{diam}(\Om)$, this kernel $K$ is 
supported in $[0,2]$. Therefore its Laplace transform is holomorphic entire, and since solving the renewal integral equation is equivalent to 
the possibility of inverting $1-\tilde K$, the only constraint is the potential presence of zeros of $1-\tilde K$; besides, since $K$ has integral
$1$, the half-plane $\Re(z)\ge 0$ is zero-free, except for $z=0$, and zeros of $1-\tilde K$ cannot accumulate on the imaginary axis. Hence
the solution of the renewal equation is obtained by inverting the Laplace transform, deforming the contour of integration to be $\Re(z)=-\a'$ 
for some $\a'>0$.

In the case of the collisionless gas in Theorem \ref{T-AlgDecayBoltz}, the integral kernel $K(s)ds$ is the image under $v\mapsto\tau_{x,v}$ 
of the probability measure proportional to $(v\cdot n_x)_+M(v)dv$ on $\bR^3$. Because of the low-speed gas molecules, its behavior near
$s=0$ is the same as that of the image measure of, say, $(v_1)_+dv$ on the unit ball $B(0,1)$ under $v\mapsto\tau_{x,v}$ --- the Maxwellian
weight $M(v)$ obviously plays no role near $v=0$. Thus, the decay of $K$ for $s\to+\infty$ only depends on the Jacobian weight $|v|^{N-1}$
that appears when integrating radial functions with the Lebesgue measure in $\bR^N$. This results in only algebraic decay for $K$ in the
limit $s\to+\infty$, so that the Laplace transform of $K$ does not have a holomorphic extension in any half-plane of the form $\Re(z)>-\a$ with
$\a>0$. Therefore, the same deformation of contour as in the proof of Theorem \ref{T-ExpDecayRad} is impossible in that case. In order to
be able to push the integration contour in the Laplace inversion formula to the left of the imaginary axis, $K$ should decay exponentially fast
as $s\to+\infty$. This would be the case if the Jacobian weight $|v|^{N-1}$ appearing in the integration of radial functions with the Lebesgue 
measure in space dimension $N$ could be replaced with $e^{-a/|v|}$ for some $a>0$. Thus, the difference between the speed of approach
to equilibrium observed in a collisionless gas with diffuse reflection of gas molecules on vessel walls kept at constant temperature, and the
spectral gap property for the linear Boltzmann equation on the other hand, can be measured in terms of the vanishing order of $|v|^{N-1}$ 
and $e^{-a/|v|}$ as $|v|\to 0$.

\section{Conclusion}

We have discussed in this paper the speed of approach to equilibrium for a collisionless gas enclosed in a vessel whose wall is kept at a uniform 
and constant temperature, assuming diffuse reflection on the vessel wall. We propose lower bounds for this convergence rate, depending on the 
possibility of initial distribution functions that are concentrated at low particle speeds. Not only do these lower bounds exclude exponential 
decay to equilibrium, they also constrain the power in any potential algebraic decay rate. On the other hand, assuming spherical symmetry of 
the initial velocity distribution function and of the vessel, we establish an algebraic decay rate by reducing the problem to a renewal equation
for the flux of outgoing gas particles. All these results are consistent with the asymptotic behavior obtained in \cite{TsujiAokiFG2010} with
numerical simulations. To clarify the role of low-speed particles, we have studied the case of radiative transfer in the vacuum, in a container
with infinitely opaque boundary, so that the total incoming radiation flux on the boundary is reemitted as a Planck distribution. We establish
exponential convergence of the radiative intensity to a Planck distribution in the long time limit, assuming the same spherical symmetries as
in the case of a collisionless gas.

There remain several open problems in this direction, such as a) establishing decay rates without any symmetry assumptions, and b) obtaining
the optimal decay rates, even in the spherically symmetric case. Both problems might involve mathematical techniques quite different from the
ones used here. We hope to return to these questions in subsequent publications.


\bigskip
\noindent
\textbf{Acknowledgements.}  We express our gratitude to Claude Bardos and Laurent Desvillettes for valuable discussions about the problem
considered here, and to the Isaac Newton Institute for Mathematical Sciences for its support and hospitality during the preparation of 
this paper. This work was partially supported by grant-in-aid for scientific research No. 20360046 from JSPS.


\end{document}